\newcommand{\F}{\mathbb{F}}
\newcommand{\Tr}{\mathrm{Tr}}
\newcommand{\N}{\mathrm{N}}
\newtheorem{thm}{Theorem}
\newtheorem{cor}[thm]{Corollary}
\newtheorem{lem}[thm]{Lemma}
\newtheorem{rem}{Remark}
\begin{document}
\renewcommand\arraystretch{1.5}
\setlength{\itemsep}{0.5ex}

\title{On Inverses of Permutation Polynomials of \\ Small Degree over Finite Fields}

\author{Yanbin~Zheng, Qiang~Wang, and~Wenhong~Wei
\thanks{Y. Zheng is with the School of Computer Science and Technology,
         Dongguan University of Technology, China, and
         with the Guangxi Key Laboratory of Cryptography and Information Security,
         Guilin University of Electronic Technology, China,
         and with the School of Computer Science and Engineering,
         South China University of Technology, China,
         and also with the Peng Cheng Laboratory, Shenzhen, China
         (e-mail: zhengyanbin@guet.edu.cn).}
\thanks{Q. Wang is with the School of Mathematics and Statistics, Carleton University,          Ottawa, Canada (e-mail: wang@math.carleton.ca).}
\thanks{W. Wei is with the School of Computer Science and Technology,
         Dongguan University of Technology, Dongguan, China
         (e-mail: 2017164@dgut.edu.cn).}
\thanks{{\color{purple} 
Please refer to this paper as: Y.~Zheng, Q.~Wang, and W.~Wei. On inverses of permutation polynomials of small degree over finite fields. IEEE Trans. Inf. Theory, 66(2):914--922, Feb. 2020.}
}
}

\markboth{T\MakeLowercase{his paper has been published by} IEEE
\MakeLowercase{available at http://dx.doi.org/10.1109/TIT.2019.2939113.
}}{Zheng \MakeLowercase{\textit{et al.}}:
On Inverses of Permutation Polynomials of Small Degree over Finite Fields}
\maketitle

\begin{abstract}
Permutation polynomials (PPs) and their inverses have applications in cryptography,
coding theory and combinatorial design theory. In this paper, we make a brief summary of the inverses of PPs of finite fields, and give the inverses of all PPs of degree $\leq 6$ over finite fields $\mathbb{F}_{q}$ for all $q$ and the inverses of all PPs of degree $7$ over  $\mathbb{F}_{2^n}$. The explicit inverse of a class of fifth degree PPs is the main result, which is obtained by using Lucas' theorem, some congruences of binomial coefficients, and a known formula for the inverses of PPs of finite fields.
\end{abstract}

\begin{IEEEkeywords}
Finite fields, permutation polynomials, inverses, binomial coefficients.
\end{IEEEkeywords}

\IEEEpeerreviewmaketitle

\section{Introduction}

\IEEEPARstart{F}{or} a prime power $q$, let $\F_{q}$ denote the finite field~with~$q$ elements,
$\F_{q}^{*} =\F_{q}\setminus\{0\}$, and $\F_{q}[x]$ the ring of polynomials over~$\F_{q}$.
A polynomial $f \in \F_{q}[x]$ is called a permutation polynomial (PP) of $\F_{q}$
if it induces a bijection from $\F_{q}$ to itself. Hence for any PP $f$ of $\F_{q}$, there exists a polynomial $f^{-1} \in \F_{q}[x]$ such that $f^{-1}(f(c)) =c$
for each $c \in \F_{q}$ or equivalently $f^{-1}(f(x)) \equiv x \pmod{x^{q} -x}$,
and $f^{-1}$ is unique in the sense of reduction modulo $x^q -x$.
Here $f^{-1}$ is defined as the composition inverse of $f$ on $\F_{q}$,
and we simply call it the inverse of $f$.

PPs of finite fields have been extensively studied
for their applications in coding theory, combinatorial design theory, cryptography, etc.
For instance, some PPs of $\F_{2^m}$ were used in~\cite{Ding-code} to construct binary cyclic codes. The Dickson PPs of degree~5 of $\F_{3^m}$  were employed
in~\cite{Ding-comb} to construct new examples of skew Hadamard difference sets,
which are inequivalent to the classical Paley difference sets.
In block ciphers, a permutation is often used as an S-box to build the confusion layer
during the encryption process and the inverse is needed while decrypting the cipher.
PPs are useful in the construction of bent functions~\cite{Sihem16Bent1,Sihem16Bent2,Bent18},
which have optimal nonlinearity for offering resistance against
the fast correlation attack on stream ciphers and the linear attack on block ciphers.
PPs were employed in~\cite{Golomb96} to construct circular Costas arrays,
which are useful in sonar and radar communications.
PPs were also applied in the construction of check digit systems
that detect the most frequent errors~\cite{check, Winterhof2014}.

The study of PPs of finite fields has a long history.
In 1897, Dickson~\cite{Dickson1897} listed all {\bf normalized} PPs of degree
$\leq 5$ of $\F_{q}$ for all $q$, and classified all PPs of degree $6$ of $\F_{q}$ for odd $q$.
In 2010, the complete classification of PPs of degree $6$ and~$7$ of $\F_{2^n}$
was settled in~\cite{JLi10}. In recent years, a lot of progress has been made on the constructions of PPs of finite fields;
see for example~\cite{Hou32, KLi17-23, NLi17-tri, DWu17, Zha17-tri}
for permutation binomials and trinomials of the form $x^rh(x^{q-1})$ of $\F_{q^2}$,
see~\cite{LWang18,DZheng19} for PPs of the form $(x^{q} -x +c)^s +L(x)$ of $\F_{q^2}$,
see~\cite{LLi18, Zheng-DCC} for PPs of the form
$(ax^{q} +bx +c)^r\phi((ax^{q} +bx +c)^s) +ux^{q}+vx$ of $\F_{q^2}$,
see \cite{Charpin09, Charpin10, Charpin14, Cepak17} for PPs of the form $x^s+\gamma h(f(x))$,
see \cite{LiQSL19} for PPs with low boomerang uniformity.
For a detailed introduction to the developments on PPs,
we refer the reader to~\cite{Hou15, FF, HFF} and the references therein.

The problem of explicitly determining the inverses of these PPs is a more challenging problem.
In theory one could directly use the Lagrange interpolation formula,
but for large finite fields this becomes very inefficient.
In fact, there are few known classes of PPs whose inverses have been obtained explicitly.
It is also interesting to note that the explicit formulae of inverses of low degree PPs
have been neglected in the literature. This motivates us to give a short review of the progress in this topic and find explicit expressions of inverses of all classes of PPs of degree~$\leq 7$ in~\cite{Dickson1897,JLi10,PPs6}.

The rest of the paper is organized as follows.
Section~II gives a brief summary of the results concerning the inverses of PPs of finite fields.
In Section~III, we obtain the inverses~of all PPs of degree~$6$ of finite fields $\F_{q}$
for all $q$ and the inverses of all PPs of degree~$7$ of $\F_{2^n}$. For simplicity,
we only list in Table~\ref{T1} all {\bf normalized} PPs of degree $\leq 5$ and their inverses.
In particular, the inverse of PP $F(x)=x^5 -2ax^3 +a^2x$ of $\F_{5^n}$
is the main result of this paper; see Theorem~\ref{x5-1}.
Section~IV starts with a formula for the inverse of an arbitrary PP,
which was first presented in~\cite{MR-1}.
This formula provides all the coefficients of the inverse of a PP $f(x)$
by computing the coefficients of $x^{q-2}$
in $f(x)^{k} \pmod{x^{q}-x}$ for $1\leq i \leq q-2$.
Based on this method, we convert the problem of computing $F^{-1}(x)$
into the problem of finding the values of four classes of binomial coefficients.
Section~V gives the explicit values of these binomial coefficients by using
Lucas' theorem and several congruences of binomial coefficients modulo~5.

\begin{table*} [t]
\caption{All normalized PPs of degree $\leq 5$ and their inverses}
\centering
\begin{threeparttable}
\begin{tabular}{l l l l}
  \toprule[1pt]\label{T1}
  Normalized PPs of $\F_{q}$ & Inverses & $q$~~($n \geq 1$) & Reference \\
  \midrule
  $x$    & $x$        & any $q$ \\
  $x^2$  & $x^{q/2}$  & $q = 2^n$ \\
  $x^3$  & $x^{(aq-a+1)/3}$~~$(a \equiv 1-q \pmod 3)$
        & $q \not\equiv 1 \pmod 3$ & Thm~\ref{monomials}\\
  $x^3 -ax$ ($a$ not a square)
    & $\sum_{i=0}^{n-1}a^{-\frac{3^{i+1} -1}{2}}x^{3^i}$
    & $q = 3^n$   & \cite{CoulterH04, Wu-L-bi}\\
  $x^4$  &  $x^{q/4}$  & $q = 2^n$  & Thm~\ref{monomials}\\
  $x^4 \pm 3x$ & $\mp(x^4 -3x)$ & $q=7$ \\
  $x^4 +ax$ ($a$ not a cube)
    & $a^{\frac{q-1}{3}}(1+a^{\frac{q-1}{3}})^{-1}
    \sum_{i=0}^{n-1}a^{-\frac{4^{i+1}-1}{3}}x^{4^i}$
    & $q = 2^{2n}$
    &\cite{CoulterH04, Wu-L-bi}\\
  $x^4 +bx^2 +ax$ ($ab \neq 0, S_{n} + aS_{n-2}^2 =1$)\tnote{*}
    & $\sum_{i=0}^{n-1}\big(S_{n-2-i}^{2^{i+1}}+a^{1 -2^{i+1}}S_{i}\big) x^{2^i}$
    & $q = 2^n$    & Cor~\ref{L-tri}\\
  $x^5$ & $x^{(aq-a+1)/5}$~~$(a \equiv (1-q)^3 \pmod{5})$
        & $q \not\equiv 1 \pmod 5$ & Thm~\ref{monomials}\\
  $x^5 +ax$ ($a^2 =2$)  & $x^5 +ax$ & $q=9$\\
  $x^5 -ax$ ($a$ not a fourth power)
    & $a^{\frac{q-1}{4}}(1-a^{\frac{q-1}{4}})^{-1}
    \sum_{i=0}^{n-1}a^{-\frac{5^{i+1} -1}{4}}x^{5^i}$
    & $q = 5^n$  & \cite{CoulterH04, Wu-L-bi}\\
  $x^5 \pm 2x^2$ & $x^5 \mp 2x^2$ & $q=7$\\
  $x^5 +ax^3  +3a^2x$ ($a$ not a square)
    & $-a^2x^9 -ax^7 +4x^5 +4a^5x^3 -5a^4x$ & $q=13$ \\
  $x^5 + ax^3 +5^{-1}a^2x$ ($a \neq 0$)\tnote{\dag}
    & $\sum_{i=1}^{\lfloor m/2\rfloor}\frac{m}{m-i}\binom{m-i}{i}
    (5^{-1}a)^{5i}x^{m - 2i}$~~$(m =\frac{3q^2-2}{5})$
    & $q \equiv \pm 2 \pmod 5$
    & \cite[Lem 4.8]{KLi18-1}\\
  $x^5 -2ax^3 +a^2x$ ($a$ not a square)
    & {\color{blue}$\sum_{i=0}^{n-1}\sum_{j=0}^{n-1}
         2a^{\frac{q -5^{i+1} -5^{j+1} +1}{4}}
         x^{\frac{q +5^i +5^j -1}{2}}$}
    & $q = 5^n$  & Thm~\ref{x5-1} \\
  $x^5 +ax^3 \pm x^2 +3a^2x$ ($a$ not a square)
    & $x^5 \pm (2ax^4 - 2x^2) +a^2x^3 +ax$ & $q=7$ \\
  \bottomrule[1pt]
  \end{tabular}
  \begin{tablenotes}
        \footnotesize
    \item[*] In \cite[Table 7.1]{FF}, the PP $L(x) = x^4 +bx^2 +ax$
        (if its only root in $\F_{2^n}$ is~0) was listed.
        Since $x^4 +bx^2$ is not a PP of $\F_{2^n}$ for any $b \in \F_{2^n}^{*}$,
        we divide $L(x)$ into $x^4$, $x^4 +ax$ ($a$ not a cube),
        and $x^4 +bx^2 +ax$ ($ab \neq 0$, $S_{n} + aS_{n-2}^2 =1$) in Table~\ref{T1}.
        The motivation is  to give the explicit inverses of $x^4$ and $x^4 +ax$.
        The sequence $\{S_{i}\}$ is defined as follows: $S_{-1} =0$, $S_{0} =1$,
        $S_{i} = b^{2^{i-1}}S_{i-1} +a^{2^{i-1}}S_{i-2}$ for $1 \leq i \leq n$.
    \item[\dag] The normalized PPs $x^5 + ax^3 +5^{-1}a^2x$ ($a$ arbitrary) of $\F_{q}$ with $q \equiv \pm 2 \pmod 5$ appeared in \cite[Table 7.1]{FF}.
         Since the case $a = 0$ is a part of the normalized PP $x^5$ of $\F_{q}$ with
         $q \not\equiv 1 \pmod 5$, we impose a restriction $a \neq 0$.
    \item[\ddag] In the expressions of the inverses, $\lfloor m/2\rfloor$ denotes the largest integer $\leq m/2$.
  \end{tablenotes}
\end{threeparttable}
\end{table*}

\section{PPs and their inverses} \label{review}

We now give a brief summary of the results concerning the inverses of PPs of finite fields,
some of which will be used in the next section.


{\bf{Linear PPs}}. For $a \neq 0$, $b \in \F_{q}$, $ax +b$ is a PP of $\F_{q}$
and its inverse is  $a^{-1}(x-b)$.

{\bf{Monomials}}. For positive integer $n$, $x^n$ is a PP of $\F_{q}$
if and only if $\gcd(n ,q-1)=1$. In this case, the inverse is $x^m$,
where $mn \equiv 1 \pmod{q-1}$.
In particular, the inverses of $x^n$ on $\F_{2^t}$ for some APN exponents~$n$ were given explicitly in~\cite{KyuS2014}.

{\bf{Dickson PPs}}. The Dickson polynomial $D_{n}(x,a)$ of the first kind of degree $n$
with parameter $a \in \F_{q}$ is given as
\[
D_{n}(x,a) = \sum_{i=0}^{\lfloor n/2\rfloor}
             \frac{n}{n-i} \binom{n-i}{i}(-a)^i x^{n-2i},
\]
where $\lfloor n/2\rfloor$ denotes the largest integer $\leq n/2$.
It is known that $D_{n}(x,a)$ is a PP of $\F_q$ if and only if $\gcd(n ,q^2 -1)=1$.
Its inverse was determined in~\cite{KLi18-1} by the following lemma.
\begin{lem}[\!\!{\cite[Lemma~4.8]{KLi18-1}}] \label{dickson-1}
  Let $m$, $n$ be positive integers such that $mn \equiv 1 \pmod{q^2-1}$.
  Then the inverse of $D_{n}(x,a)$ on $\F_q$ is $D_{m}(x, a^n)$.
\end{lem}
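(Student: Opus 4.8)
The plan is to exploit the functional equation characterizing Dickson polynomials together with the composition law it yields. First I would recall the defining identity
\[
  D_n\!\left(y + \frac{a}{y},\, a\right) = y^n + \frac{a^n}{y^n},
  \qquad y \neq 0,
\]
which holds over any extension of $\F_q$; one verifies it by noting that both sides satisfy the recurrence $u_n = (y + a/y)u_{n-1} - a\,u_{n-2}$ with $u_0 = 2$ and $u_1 = y + a/y$, exactly the recurrence and initial data of $D_n(y+a/y,a)$. I would treat this identity as standard.

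From it the crucial algebraic step is the composition law. Writing $x = y + a/y$ and setting $z = y^n$, the identity gives $D_n(x,a) = z + a^n/z$, and applying it once more yields
\[
  D_m\!\big(D_n(x,a),\, a^n\big)
    = z^m + \frac{(a^n)^m}{z^m}
    = y^{mn} + \frac{a^{mn}}{y^{mn}}
    = D_{mn}(x,a).
\]
Since the hypothesis $mn \equiv 1 \pmod{q^2-1}$ forces $\gcd(n,q^2-1)=1$, the criterion stated above makes $D_n(x,a)$ a PP of $\F_q$; hence it suffices to show that $D_m(\cdot,a^n)$ is a left inverse of $D_n(\cdot,a)$ on $\F_q$, i.e. that $D_{mn}(c,a) = c$ for every $c \in \F_q$.

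The heart of the proof is this evaluation. For $a \neq 0$ and $c \in \F_q$, I would take a root $y$ of $t^2 - ct + a$; being a root of a quadratic over $\F_q$ it lies in $\F_{q^2}$, and it is nonzero because the product of the two roots equals $a \neq 0$, so $y \in \F_{q^2}^{*}$ and $y^{q^2-1}=1$. Then $mn \equiv 1 \pmod{q^2-1}$ gives $y^{mn}=y$ and $a^{mn}=a$, whence
\[
  D_{mn}(c,a) = y^{mn} + \frac{a^{mn}}{y^{mn}} = y + \frac{a}{y} = c.
\]
The degenerate case $a=0$ reduces to monomials, since $D_n(x,0)=x^n$ and $D_m(x,0)=x^m$, and $mn \equiv 1 \pmod{q-1}$ makes $x^m$ the inverse of $x^n$ on $\F_q$.

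I expect the only real obstacle to be conceptual rather than computational: one must observe that $y$ generally lies in $\F_{q^2}$ and not in $\F_q$, which is precisely why the governing modulus is $q^2-1$. A careless argument that forced $y \in \F_q^{*}$ and used only $y^{q-1}=1$ would silently replace $q^2-1$ by $q-1$ and produce the wrong exponent.
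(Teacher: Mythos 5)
Your proof is correct. Note that the paper offers no proof of this lemma at all---it is quoted directly from \cite[Lemma~4.8]{KLi18-1}---and your argument via the functional equation $D_n(y+a/y,a)=y^n+a^n/y^n$, the resulting composition law $D_m(D_n(x,a),a^n)=D_{mn}(x,a)$, and the parametrization of each $c\in\F_q$ by a nonzero root $y\in\F_{q^2}$ of $t^2-ct+a$ is exactly the standard route taken in that reference. Your closing observation about why the governing modulus is $q^2-1$ rather than $q-1$ is precisely the right point to flag.
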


{\bf{PPs of the form $\mathbf{x^rh(x^s)}$}}.
The first systematic study of PPs of $\F_{q}$ of the form $f(x)=x^{r}h(x^s)$ was made in~\cite{WL91},
where $q-1 =ds$, $1 \le r <s$ and $h  \in \F_{q}[x]$.
A~criterion for $f$ to be a PP of $\F_{q}$ was given in \cite{WL91}. Later on, several equivalent criteria were found in other papers; see for instance \cite{PL01,Wang07, Zieve09}. Essentially, it says that $f$ is a PP of $\F_{q}$ if and only if $\gcd(r, s)= 1$ and $x^rh(x)^{s}$ permutes
$U_d := \{1, \omega, \cdots, \omega^{d-1}\}$,
where $\omega$ is a primitive $d$-th root of unity of $\F_q$.
\cite[Theorem~1]{MR-1} characterized all the coefficients of the inverse
of $x^r g(x^s)^{d}$ on $\F_q$, where $\gcd(r, q-1)=1$.
This result was generalized in~\cite{Wang-1},
and the inverse of $f$ on $\F_q$ was given by
\[
f^{-1}(x)=\frac{1}{d}\sum_{i=0}^{d-1}\sum_{j=0}^{d-1}
         \omega^{i(t-jr)}\big(x/h(\omega^{i}) \big)^{\widetilde{r} +js},
\]
where $1\le \widetilde{r} <s$ and $r\widetilde{r}+st=1$.
This inverse was obtained later in~\cite{Zheng-2} by a piecewise method.
When $\gcd(r, q-1)=1$, the inverses of $f$ on $\F_q$ was given in~\cite{KLi18-1} by
\[
f^{-1}(x) = \big(x^{q-s}h(\ell(x^s))^{s-1}\big)^{r'}\ell(x^s),
\]
where $rr' \equiv 1 \pmod{q-1}$ and $\ell(x)$ is the inverse of $x^rh(x)^s$ on $U_d$. The method employed in~\cite{KLi18-1} is a multiplicative analogue of~\cite{TW-1} and~\cite{Wu-bil}.

{\bf{Linearized PPs}}.
Suppose $L(x) = \sum_{i=0}^{n-1} a_i x^{q^i} \in \F_{q^n}[x]$.
It is known that $L$ is a PP of $\F_{q^n}$
if and only if the associate Dickson matrix
\[
D_{L}  :=
\left(
  \begin{array}{cccc}
 a_{0}       & a_{1}     &  \cdots  & a_{n-1}      \\
 a_{n-1}^{q} & a_{0}^{q} &  \cdots  & a_{n-2}^{q}  \\
 \vdots      & \vdots    &  \vdots  & \vdots       \\
 a_{1}^{q^{n-1}}  & a_{2}^{q^{n-1}} &  \cdots  & a_{0}^{q^{n-1}}
  \end{array}
\right)
\]
is nonsingular~\cite[Page 362]{FF}.
In this case, the inverse was given in~\cite[Theorem 4.8]{Wu-L} by
\[
L^{-1}(x) = (\det(D_{L}))^{-1}
\textstyle\sum_{i=0}^{n-1} \bar{a}_i x^{q^i},
\]
where $\bar{a}_i$ is the $(i,0)$-th cofactor of $D_{L}$, i.e.,
the determinant of $D_{L}$ is
$
\det(D_{L}) = a_0 \bar{a}_0 + \sum_{i=1}^{n-1} a_{n-i}^{q^i} \bar{a}_i.
$
The inverses of some special linearized PPs were also obtained;
see~\cite{Wu-L-bi} for the inverse of arbitrary linearized permutation binomial,
see~\cite{Wu-odd,Wu-PhD} for the inverse of $x +x^2 +\Tr(x/a)$ on $\F_{2^n}$.
Very recently, linearized PPs of the form $L(x) +K(x)$ of $\F_{q^n}$
and their inverses are presented in~\cite[Theorem 3.1]{Reis18},
where $L$ is a linearized PP of $\F_{q^n}$ and
$K$ is a nilpotent linearized polynomial such that $L\circ K=K\circ L$.

{\bf{Bilinear PPs}}.
The product of two linear functions is a bilinear function. Let $q$ be even and $n$ be odd.
The inverse of bilinear PPs $x(\Tr_{q^n/q}(x)+ax)$ of $\F_{q^n}$ was obtained in~\cite{Coulter-1}, where $a \in \F_{q}\setminus \F_{2}$. The inverse of more general bilinear PPs
\[
f(x) = x(L(\Tr_{q^n/q}(x)) +a\Tr_{q^n/q}(x) +ax)
\]
of $\F_{q^n}$ was given in~\cite{Wu-bil} in terms of the inverse of bilinear PP $xL(x)$ when restricted to $\F_{q}$, where $a \in \F_{q}^{*}$ and $L \in \F_{q}[x]$ is a 2-polynomial.

{\bf{PPs of the form $\mathbf{x^s+\gamma h(f(x))}$}}.
Let $\gamma \in \F_{q^n}^{*}$ be a $b$-linear translator with respect to $\F_q$ for the mapping
$f: \F_{q^n} \rightarrow \F_{q}$, i.e., $f(x +u\gamma) -f(x) =ub$
holds for all $x \in \F_{q^n}$, all $u \in \F_{q}$ and a fixed $b \in \F_{q}$.
\cite[Theorem 8]{Kyur11} stated that $F_{1}(x) = x +\gamma f(x)$
is a PP of $\F_{q^n}$ if $b \neq -1$ (it is actually also a necessary condition).
Its inverse was given in \cite[Theorem~3]{Kyur11} by
$F_{1}^{-1}(x) = x -\frac{\gamma}{b+1} f(x)$.
Let $h$ be an arbitrary mapping from $\F_{q}$ to itself.
\cite[Theorem 6]{Kyur11} stated that $F_{2}(x) = x +\gamma h(f(x))$ permutes $\F_{q^n}$
if and only if $u +bh(u)$ permutes $\F_{q}$.
When $b =0$, the inverse was given in \cite[Proposition 4]{Cepak17}
by $F_{2}^{-1}(x) = x +(p-1) \gamma h(f(x))$,
where $p$ is the characteristic of $\F_{q^n}$.
PPs of the form $F_{3}(x) = x^s +\alpha \Tr (x^t)$ of $\F_{2^n}$ were studied in~\cite{Charpin09,Charpin10,Charpin14},
where $1 \leq s$, $t \leq 2^n -2$, $\alpha \in \F_{2^n}^{*}$, and $\Tr$ is the absolute trace function. A criterion for $F_{3}$ to be a PP of $\F_{2^n}$ was given in \cite{Charpin10, Charpin14}. If $F_{3}$ is a PP of $\F_{2^n}$ and $t = s(2^i +1)$ for some $0 \leq i \leq n-1$ and $i \neq n/2$, then the inverse is given in \cite[Theorem 4]{Charpin14} by $F_{3}^{-1}(x) = (x +\alpha \Tr (x^{2^i +1}))^{r}$, where $r$ is the inverse of $s$ modulo $2^n -1$.

{\bf{Involutions}}.
An involution is a permutation such that its inverse is itself.
A systematic study of involutions over $\F_{2^n}$ was made in~\cite{Involutions}.
The authors characterized the involution property of monomials,
Dickson polynomials~\cite{Dick-Invo} and linearized polynomials over $\F_{2^n}$,
and proposed several methods of constructing new involutions from known ones.
In particular, involutions of the form $G(x)+\gamma f(x)$ were studied in~\cite{Involutions},
where $G$ is an involution, $\gamma \in \F_{2^n}^{*}$ and $f \in \F_{2^n}[x]$.
Involutions of the form $x^{r}h(x^{s})$ were studied in~\cite{ZhengYL+2019}.
Moreover, the number of fixed points of involutions over $\F_{2^n}$ was also discussed in~\cite{Involutions}. A class of involutions over $\F_{2^n}$ with no fixed points was given in~\cite{Reis18}. Involutions satisfying special properties were presented in~\cite{Sihem16Bent1,Sihem16Bent2,Bent18} to construct Bent functions.

{\bf{PPs from the AGW criterion}}.
The Akbary--Ghioca--Wang (AGW) criterion~\cite{AGW} is an important method for constructing PPs.
A necessary and sufficient condition for $f(x) = h(\psi(x))\varphi(x) +g(\psi(x))$
to be a PP of $\F_{q^n}$ was given in~\cite{AGW} by using the additive analogue of AGW criterion,
where $h, \psi, \varphi, g \in \F_{q^n}[x]$ satisfy some conditions.
In~\cite{TW-1}, the inverse of $f$ was written in terms of the inverses of two other polynomials
bijecting two subspaces of $\F_{q^n}$.
In some cases, these inverses can be explicitly obtained. Further extensions of~\cite{TW-1} can be found in~\cite{TW17}. The general results in \cite{TW-1, TW17} contain some concrete classes mentioned earlier such as bilinear PPs \cite{Wu-bil}, linearized PPs of the form $L(x)+K(x)$\cite{Reis18}, and PPs of the form $x+ \gamma f(x)$ with $b$-linear translator~$\gamma$ \cite{Kyur11}.

{\bf{Generalized cyclotomic mapping PPs}}.
Cyclotomic mapping PPs of finite fields were introduced in~\cite{Nied-cyc, Wang07},
and were generalized in~\cite{Wang-cyc}. A simple class of generalized cyclotomic mapping PPs
of $\F_q$ was defined in~\cite{Wang-cyc} as
\begin{equation}\label{cmpp}
f(x) 
=\frac{1}{d}\sum_{i=0}^{d-1}\sum_{j=0}^{d-1}a_{i}\omega^{-ij}x^{r_{i} +js},
\end{equation}
where $q-1=ds$, $a_i \in \F_{q}^{*}$, $1 \le r_i <s$ and $\omega$
is a primitive $d$-th root of unity of $\F_q$.
Several equivalent criteria for $f$ permuting $\F_{q}$ were given in~\cite{Wang-cyc},
which stated that $f$ is a PP of $\F_{q}$ if and only if
$\gcd(\prod_{i=0}^{d-1}r_{i}, s) = 1$  and
$\{a_{i}^{s} \omega^{i r_{i}}: i =0,1,\ldots, d-1\} = U_d$.
The inverses of $f$ on $\F_q$ was given in~\cite{Zheng-2,Wang-cyc2} by
\[
f^{-1}(x)
= \frac{1}{d}\sum_{i=0}^{d-1}\sum_{j=0}^{d-1}
 \omega^{i(t_{i}-jr_i)}(x/a_i)^{\widetilde{r_i} + js},
\]
where $1\le \widetilde{r_i} <s$ and $r_i\widetilde{r_i}+st_i=1$.
In~\cite{Wang-cyc2}, all involutions of the form~\eqref{cmpp} were characterized,
and a fast algorithm was provided to generate many classes of these PPs, their inverses, and involutions. The class of PPs of the form $x^rh(x^s)$ is in fact a special case of generalized cyclotomic mapping PPs.

{\bf{More general piecewise PPs}}.
The idea of more general piecewise constructions of permutations was summarized in~\cite{FH-pw,CHZ14}.
Piecewise constructions of inverses of piecewise PPs were studied in~\cite{Zheng-1,Zheng-2}.
As applications, the inverse of PP $f(x) = ax +x^{(q+1)/2}$ of $\F_q$ was given in~\cite{Zheng-1} by
\[
f^{-1}(x)=(a^2-1)^{-1}(ax -bx^{(q+1)/2}),
\]
where $(a^2-1)^{(q-1)/2}=1$, $b = (a+1)^{(q-1)/2} \in \{-1, 1\}$, and $q$ is odd.
The inverse of PP of $\F_{p^n}$ of the form
\[
(ax^{p^k} -bx +c)^{\frac{p^n +1}{2}} \pm (ax^{p^k} +bx)
\]
was obtained in~\cite{Zheng-1}, where $p$ is odd
and $a, b, c \in \F_{p^n}$.
Three classes of involutions of finite fields were also given in~\cite{Zheng-1,Zheng-2}.
In addition, the PP $f$ in~\eqref{cmpp} can be written as piecewise form,
and its inverse was deduced by the piecewise method in~\cite{Zheng-2}.

\section{The inverses of PPs of small degree}
Assume $g \in \F_q[x]$ and $b,c,d \in \F_q$ with $b \ne 0$.
Then $g$ is a PP of $\F_{q}$ if and only if $f(x)=b g(x+c) +d$ is.
By choosing $b, c, d$ suitably, we can obtain $f$ in {\bf normalized form},
that is, $f$ is monic, $f(0)=0$, and when the degree $m$ of $f$
is not divisible by the characteristic of $\F_q$,
the coefficient of $x^{m-1}$ is $0$. It suffices, therefore, to study normalized PPs.
In 1897, Dickson~\cite{Dickson1897} listed all normalized PPs of degree $\leq 5$ of $\F_{q}$ for all $q$,
and classified all PPs of degree $6$ of $\F_{q}$ for odd $q$. In 2010, the complete classification of PPs of degree $6$ and $7$ of $\F_{2^n}$ was settled in~\cite{JLi10}. For a verification of the classification of normalized PPs of degree $6$ of $\F_{q}$ for all $q$, see~\cite{PPs6}.

According to the complete classifications of PPs in \cite{Dickson1897, JLi10, PPs6},
all PPs of degree~$6$ of $\F_{q}$ for all $q$ are over small fields $\F_{q}$ with $q \leq 32$, except for $x^6$ over $\F_{2^n}$.
All PPs of degree~$7$ of $\F_{2^n}$ are over $\F_{2^n}$ with $n \leq 4$, except for $x^7$ and $x^7 +x^5 +x$. The inverses of PPs of $\F_{q}$ with $q \leq 32$ can be calculated by the Lagrange interpolation formula or Theorem~\ref{coe-f-1} in the next section. The inverses of PPs $x^6$ and $x^7$ of $\F_{2^n}$ can be obtained by the following Theorem~\ref{monomials}. The polynomial $x^7 +x^5 +x$ is actually the degree~$7$ Dickson polynomial $D_{7}(x,1)$ over $\F_{2^n}$,
and its inverse is $D_{m}(x,1)$ (by Lemma~\ref{dickson-1}), where $m$ is the inverse of  $7$ modulo $2^{2n} -1$. In other words, we obtain the inverses of all PPs of degree $6$ of $\F_{q}$ for all $q$ and the inverses of all PPs of degree $7$ of $\F_{2^n}$.

In the rest of this section, we will give the inverses of all normalized PPs of degree $\leq 5$ in~\cite{Dickson1897},
which are actually the same as that in \cite[Table 7.1]{FF} or in the previous Table~\ref{T1}.
Since the inverses of normalized PPs of small fields $\F_{q}$ with $q \leq 13$
can be obtained by the Lagrange interpolation formula,
we need only consider the normalized PPs of degree $\leq 5$ of~$\F_{q}$
for infinite many $q$.

\subsection{Inverses of monomials}
The inverse of $x$ is clearly itself, and the inverse of $x^2$ on $\F_{2^n}$ is~$x^{2^{n-1}}$.
The following theorem gives the explicit inverse of~$x^m$ on $\F_q$ for $m \geq 3$.

\begin{thm}\label{monomials}
For $m \geq 3$, if $x^m$ is a PP of $\F_{q}$,
then its inverse on $\F_{q}$ is $x^{(aq-a+1)/m}$,
where 
$a \equiv -(q-1)^{\phi(m)-1} \pmod{m}$
and $\phi$ is Euler's phi function.
\end{thm}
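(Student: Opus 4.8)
The plan is to reduce everything to the elementary fact about monomials recorded earlier in the excerpt: since $x^m$ is a PP of $\F_q$, necessarily $\gcd(m, q-1) = 1$, and its inverse on $\F_q$ is $x^k$ for any positive integer $k$ satisfying $km \equiv 1 \pmod{q-1}$. The entire task thus becomes verifying that the stated exponent $(aq - a + 1)/m$ is exactly such a $k$, i.e., that it is a positive integer congruent to $m^{-1}$ modulo $q-1$.

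First I would settle integrality. Writing $aq - a + 1 = a(q-1) + 1$, integrality of the exponent is equivalent to $a(q-1) \equiv -1 \pmod{m}$. From the definition $a \equiv -(q-1)^{\phi(m)-1} \pmod{m}$ I obtain $a(q-1) \equiv -(q-1)^{\phi(m)} \pmod{m}$, and since $\gcd(q-1, m) = 1$, Euler's theorem yields $(q-1)^{\phi(m)} \equiv 1 \pmod{m}$. Hence $a(q-1) \equiv -1 \pmod{m}$, so $m \mid a(q-1) + 1$ and $k := (aq - a + 1)/m \in \Z$. The congruence is then immediate: by construction $km = a(q-1) + 1$, so $km \equiv 1 \pmod{q-1}$, identifying $k$ as the inverse of $m$ modulo $q-1$, and by the monomial fact $x^k$ is the inverse of $x^m$ on $\F_q$.

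Finally I would confirm positivity, so that $x^k$ is a genuine monomial inducing the inverse map. Choosing the representative $a$ with $1 \le a \le m-1$, one checks $a \ne 0$: indeed $a \equiv 0 \pmod m$ would force $m \mid (q-1)^{\phi(m)-1}$, which is impossible because $\gcd(q-1, m) = 1$ and $m \geq 3$. With $a \geq 1$ the numerator $a(q-1) + 1$ is a positive integer divisible by $m$, so $k \geq 1$; since the function $x \mapsto x^k$ on $\F_q$ depends only on $k \bmod (q-1)$ for $k \geq 1$, this $x^k$ is the desired inverse.

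I do not anticipate a serious obstacle, as the argument is short; the only step requiring care is the integrality claim, where Euler's theorem is precisely the tool that converts the prescribed value of $a$ into the divisibility $m \mid aq - a + 1$. Everything else is bookkeeping about the exponent modulo $q-1$ together with the previously stated characterization of inverses of permutation monomials.
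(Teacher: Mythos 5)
Your proposal is correct and follows essentially the same route as the paper's proof: Euler's theorem applied to $a(q-1)\equiv -(q-1)^{\phi(m)}\equiv -1\pmod m$ gives integrality of the exponent, and the identity $km - a(q-1)=1$ gives $km\equiv 1\pmod{q-1}$. The only difference is your added (harmless) check that a representative $a$ can be chosen so the exponent is positive, which the paper leaves implicit.
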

\begin{proof}
If $x^m$ is a PP of $\F_{q}$, then $\gcd(m, q-1) =1$ and so
$aq-a+1 = 1+a(q-1) \equiv 1 -(q-1)^{\phi(m)} \equiv 0 \pmod{m}$.
Also note that $m(aq-a+1)/m -a(q-1) =1$. Hence the inverse of $m$ modulo $q-1$ is $(aq-a+1)/m$.
\end{proof}
The proof above converts the problem of determining the inverse of $m$ modulo $q-1$ to that of computing the inverse of $q-1$ modulo $m$, and the latter is easy for small $m$.
For instance, if $m=3$ and $\gcd(3, q-1) =1$, then the inverse of~$q-1$ modulo $3$ is
$(q-1)^{\phi(3)-1} = q-1$,
and so the inverse of~$3$ modulo $q-1$ is $(aq-a+1)/3$, where $a \equiv 1-q \pmod{3}$.

\subsection{Inverses of linearized binomials and trinomials}

Assume $L_{st}(x) := bx^{q^s} +cx^{q^t}$ is an arbitrary linearized binomial of $\F_{q^n}$, where $b$, $c \in \F_{q^n}^{*}$ and $0 \leq t < s \leq n-1$. Then $L_{st}(x)= b(x^{q^{s-t}} +b^{-1}c x )\circ x^{q^t}$,
and so $L_{st}$ permutes $\F_{q^n}$ if and only if
$L_{r}(x) := x^{q^r} -ax$ permutes $\F_{q^n}$, where $r = s-t$ and $a =-b^{-1}c$.
The inverse of $L_{r}$ on $\F_{q^n}$ was given in~\cite{CoulterH04, Wu-L-bi} as follows.

\begin{thm}[\!\!\cite{CoulterH04, Wu-L-bi}]\label{L-bi}

Let $L_{r}(x) =x^{q^r} -ax$, where $a \in \F_{q^n}^{*}$ and $1 \leq r \leq n-1$.
Then $L_{r}$ is a PP of $\F_{q^n}$ if and only if the norm $\N_{q^n/q^d}(a) \neq 1$,
where $d =\gcd(n, r)$. In this case, its inverse on $\F_{q^n}$ is
\vspace{-4pt}
\[
L_{r}^{-1}(x)=\frac{\N_{q^n/q^d}(a)}{1-\N_{q^n/q^d}(a)}
\sum_{i=0}^{n/d -1}a^{-\frac{q^{(i+1)r} -1}{q^r-1}}x^{q^{i r}}.
\]
\end{thm}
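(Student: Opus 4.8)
The plan is to reformulate $L_r$ through the Frobenius automorphism and to treat it as the operator $\sigma - a$, where $\sigma$ denotes the map $x \mapsto x^{q^r}$ on $\F_{q^n}$. Since $\gcd(n, r) = d$, the automorphism $\sigma$ fixes exactly $\F_{q^d}$ and has order $N := n/d$; in fact $\sigma$ generates $\mathrm{Gal}(\F_{q^n}/\F_{q^d})$, so that $\N_{q^n/q^d}(a) = \prod_{j=0}^{N-1}\sigma^j(a)$. With this notation $L_r(x) = \sigma(x) - ax$ is $\F_{q^d}$-linear, hence a PP of $\F_{q^n}$ if and only if its kernel is trivial.

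For the permutation criterion, I would analyze the equation $\sigma(x) = ax$. Applying $\sigma$ repeatedly gives $\sigma^k(x) = \left(\prod_{j=0}^{k-1}\sigma^j(a)\right)x$, so that $\sigma^N(x) = \N_{q^n/q^d}(a)\,x$. Because $\sigma^N = \mathrm{id}$, a nonzero solution forces $\N_{q^n/q^d}(a) = 1$; conversely, if the norm equals $1$, then Hilbert's Theorem~90 for the cyclic extension $\F_{q^n}/\F_{q^d}$ produces $b \neq 0$ with $a = \sigma(b)b^{-1}$, i.e.\ $\sigma(b) = ab$, a nonzero kernel element. Thus $L_r$ is a PP exactly when $\N_{q^n/q^d}(a) \neq 1$.

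For the inverse, I would verify the stated formula by direct substitution rather than guess it. Writing $P_i := \prod_{j=0}^{i}\sigma^j(a)$, the identity $\frac{q^{(i+1)r}-1}{q^r-1} = \sum_{j=0}^{i}q^{jr}$ shows the claimed inverse equals $M(x) = \frac{\N}{1-\N}\sum_{i=0}^{N-1}P_i^{-1}\sigma^i(x)$, with $\N = \N_{q^n/q^d}(a) = P_{N-1}$ and all coefficients lying in $\F_{q^n}^{*}$ since $\N \neq 1$. Computing $M(L_r(x))$, I would expand $\sigma^i(\sigma(x) - ax) = \sigma^{i+1}(x) - \sigma^i(a)\sigma^i(x)$ and use the recursion $P_i = P_{i-1}\sigma^i(a)$ to rewrite $P_i^{-1}\sigma^i(a) = P_{i-1}^{-1}$. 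The two resulting sums telescope, and after reindexing the wraparound term $\sigma^N(x) = x$ with coefficient $P_{N-1}^{-1} = \N^{-1}$ survives, giving $\sum_{i=0}^{N-1}P_i^{-1}\sigma^i(\sigma(x)-ax) = (\N^{-1}-1)x$. Multiplying by $\frac{\N}{1-\N}$ yields exactly $x$ modulo $x^{q^n}-x$, confirming $M = L_r^{-1}$.

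The main obstacle is bookkeeping rather than conceptual: one must handle the non-commutativity of $\sigma$ with multiplication by $a$ (the operator relation $\sigma \circ a = \sigma(a)\circ \sigma$) correctly throughout, and keep careful track of the boundary terms in the telescoping sum—specifically that the top index produces $\sigma^N(x) = x$ and that $P_{N-1}$ is precisely the norm. Identifying the exponent $\frac{q^{(i+1)r}-1}{q^r-1}$ with the conjugate-product $P_i$ is the small translation that makes the telescoping transparent.
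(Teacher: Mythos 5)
Your proof is correct. Note, however, that the paper itself offers no proof of this theorem: it is stated with a citation to Coulter--Henderson and to Wu's work on linearized permutation binomials, and the surrounding text only adds the remark that $\N_{q^n/q^d}(a)\neq 1$ is equivalent to $a$ not being a $(q^d-1)$th power. So there is no in-paper argument to compare against, and your write-up supplies a complete, self-contained justification. The two halves of your argument are both sound: the identification of $\langle\sigma\rangle$ with $\mathrm{Gal}(\F_{q^n}/\F_{q^d})$ (valid because $\gcd(r/d,\,n/d)=1$, so $\sigma$ generates the group and $\prod_{j=0}^{N-1}\sigma^j(a)$ really is the norm) makes the kernel analysis via $\sigma^N(x)=\N_{q^n/q^d}(a)\,x$ and Hilbert~90 airtight; and the translation $a^{-(q^{(i+1)r}-1)/(q^r-1)}=P_i^{-1}$ with $P_i=\prod_{j=0}^{i}\sigma^j(a)$ reduces the verification of the inverse to the telescoping identity $\sum_{i}P_i^{-1}\sigma^i(\sigma(x)-ax)=(\N^{-1}-1)x$, which I have checked and which does yield $M\circ L_r=\mathrm{id}$; since $L_r$ is already known to be a bijection, this one-sided identity suffices. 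The only point worth making explicit in a final version is the base case $P_0^{-1}\sigma^0(a)=1$ of the recursion $P_i^{-1}\sigma^i(a)=P_{i-1}^{-1}$, which is where the $-x$ term in the telescoped sum comes from.
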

The norm $\N_{q^n/q^d}(a) \neq 1$ if and only if $a$ is not a $(q^d-1)$th power.
Hence, Theorem~\ref{L-bi} gives the inverse of $x^{q^r} -ax$ for $q^r =3$, $4$, $5$ in Table~\ref{T1}.


The normalized PP of the form $x^4 +bx^2 +ax$ of $\F_{2^n}$
is the only linearized trinomial in Table~\ref{T1}.
Its inverse has a close relation with the sequence
\begin{equation}\label{ci-1}
 S_{-1} =0, ~  S_{0} =1, ~
 S_{i} = b^{2^{i-1}}S_{i-1} +a^{2^{i-1}}S_{i-2},
\end{equation}
where $1 \leq i \leq n$ and $a, b \in \F_{2^n}^{*}$.
An argument similar to that in \cite[Lemma 2]{HK10}
leads to an equivalent definition of~$S_{i}$:
\begin{equation}\label{ci-2}
S_{i} = b S_{i-1}^2 +a^2 S_{i-2}^{4}, \quad  1 \leq i \leq n.
\end{equation}
Denote $Z_{n} = S_{n} + a S_{n-2}^2$. Then
\[\begin{split}
Z_{n}^2
& = S_{n}^2  + a^2S_{n-2}^4
  \overset{\eqref{ci-1}}{=}
  b S_{n-1}^2  +a S_{n-2}^2  + a^2S_{n-2}^4  \\
& \overset{\eqref{ci-2}}{=}
  S_{n}  +a S_{n-2}^2
 = Z_{n},
\end{split}\]
and so $Z_{n}=0$ or $1$. A criterion for $f(x)= x^{q^2} +bx^q +ax$ to be a PP of $\F_{q^n}$
and the inverse of $f$ on $\F_{q^n}$ were presented in \cite[Theorem 3.2.29]{Wu-PhD}.
Taking $q=2$ in this theorem and using the fact $Z_{n}=0$ or $1$,
we obtain the following result.

\begin{cor}\label{L-tri}
Let $L(x) = x^4 +bx^2 +ax$, where $a, b \in \F_{2^n}^{*}$ and $n \geq 1$.
Then $L$ is a PP of $\F_{2^n}$ if and only if
$S_{n} + a S_{n-2}^2=1$.
In this case, the inverse of $L$ on $\F_{2^n}$ is
\[
L^{-1}(x) = \sum_{i=0}^{n-1}\big( S_{n-2-i}^{2^{i+1}} +a^{1 -2^{i+1}} S_{i} \big) x^{2^i}.
\]
\end{cor}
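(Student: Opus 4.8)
The plan is to treat $L$ as an $\F_{2}$-linearized polynomial and to establish the criterion and the inverse formula together through the composition identity $L^{-1}\circ L \equiv x \pmod{x^{2^n}-x}$. First I would record that $L(x)=a_0x+a_1x^2+a_2x^4$ with $a_0=a$, $a_1=b$, $a_2=1$ (and $a_i=0$ for $i\ge 3$), so $L$ is the $q=2$ instance of $\sum_{i=0}^{n-1}a_ix^{q^i}$, and any inverse is again $\F_2$-linearized, say $L^{-1}(x)=\sum_{i=0}^{n-1}c_ix^{2^i}$. Using that the Frobenius $x\mapsto x^2$ has order $n$ on $\F_{2^n}$ (so exponents $2^{\ell}$ may be reduced modulo $n$), the requirement $L^{-1}(L(x))\equiv x$ becomes, after collecting the coefficient of each $x^{2^j}$, the cyclic linear system
\[
c_{j-2}+b^{2^{j-1}}c_{j-1}+a^{2^{j}}c_{j}=\delta_{j,0},\qquad j=0,1,\dots,n-1,
\]
with all subscripts and the exponent $j-1$ read modulo $n$.

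Next I would substitute the claimed coefficients $c_i=S_{n-2-i}^{2^{i+1}}+a^{1-2^{i+1}}S_i$ into this system and verify each equation. For $j\neq 0$ I expect every equation to collapse to $0$ as a consequence of the defining recurrence \eqref{ci-1} (its squared form \eqref{ci-2} being the convenient one for lining up the Frobenius powers $2^{i+1}$), the residual boundary terms cancelling by the field relation $a^{2^n-1}=1$ together with the initial data $S_{-1}=0$, $S_0=1$. For $j=0$ the equation should \emph{not} be an identity: the same reductions leave exactly $S_n+aS_{n-2}^2$ on the left, so this equation holds if and only if $S_n+aS_{n-2}^2=1$, i.e.\ $Z_n=1$. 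This simultaneously shows that when $Z_n=1$ the displayed polynomial is a genuine compositional inverse of $L$, whence $L$ permutes $\F_{2^n}$ and $L^{-1}$ is as claimed.

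For the converse I would invoke the linearized-PP criterion recalled before Theorem~\ref{L-bi}: $L$ permutes $\F_{2^n}$ if and only if its Dickson matrix $D_L$ is nonsingular. For these sparse coefficients $D_L$ is upper triangular apart from three lower-left corner entries, and in its permutation expansion the diagonal term $\prod_j a^{2^j}=a^{2^n-1}$ and the full-cyclic-shift term $\prod_j a_2^{2^j}=1$ coincide, hence cancel in characteristic $2$; the remaining permutations assemble into $S_n+aS_{n-2}^2$, so that $\det D_L=Z_n$. Combined with the already established fact $Z_n\in\{0,1\}$, this yields that $L$ is a PP if and only if $\det D_L=Z_n=1$, giving both directions of the criterion. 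This is also exactly the $q=2$ specialization of \cite[Theorem 3.2.29]{Wu-PhD}; since that statement is only cited and not reproduced here, I develop the $q=2$ case directly.

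The main obstacle is the bookkeeping in the verification step: carrying the cyclic index reductions modulo $n$ and the compounding Frobenius exponents through each equation so that the $j\neq 0$ relations reduce cleanly to \eqref{ci-1}--\eqref{ci-2}, and isolating the single summand $S_n+aS_{n-2}^2$ from the $j=0$ relation (equivalently, reassembling the non-extremal permutations of $D_L$ into that same expression). One must also treat the small cases $n\le 3$ separately, where the band of $D_L$ overlaps its own wrap-around and the trinomial degenerates, checking these directly against $S_{-1}=0$ and $S_0=1$.
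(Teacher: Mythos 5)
Your proposal is correct in outline, and it takes a genuinely different route from the paper. The paper does not verify the inverse directly: it first shows $Z_n^2=Z_n$ (hence $Z_n\in\{0,1\}$) from \eqref{ci-1} and \eqref{ci-2}, and then obtains both the criterion and the inverse formula by setting $q=2$ in the cited result \cite[Theorem 3.2.29]{Wu-PhD} on $x^{q^2}+bx^q+ax$ over $\F_{q^n}$; the idempotency of $Z_n$ is the only new ingredient, used to replace ``$Z_n\neq 0$'' by ``$Z_n=1$''. You instead give a self-contained verification, and the key steps do check out: the cyclic system $c_{j-2}+b^{2^{j-1}}c_{j-1}+a^{2^j}c_j=\delta_{j,0}$ is the right one; for generic $j$ the two halves of each equation vanish by \eqref{ci-2} raised to the power $2^{j-1}$ and by \eqref{ci-1} times $a^{1-2^j}$, respectively; the $j=1$ wrap-around closes via $S_{-1}=0$, $S_1=b$; and the $j=0$ equation reduces to $S_n+aS_{n-2}^2=1$ precisely because $a^{1-2^{n-1}}=a^{2^{n-1}}$ (i.e.\ $a^{2^n-1}=1$), which is the cancellation you anticipated. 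What your approach buys is a proof that does not lean on an unpublished thesis and that shows exactly where the condition $Z_n=1$ enters (it is the constant-term equation of the convolution system); what the paper's approach buys is brevity and generality, since Wu's theorem covers all $q$. The one step you should not underestimate is the converse via $\det D_L=Z_n$: the non-extremal permutation terms do assemble into $S_n+aS_{n-2}^2$, but only after again invoking $a^{2^n-1}=1$ (for $n=2$, for instance, $\det D_L=(a+1)^3+b^3$ while $Z_2=a^2+a+b^3$, and these agree only because $a^3=1$), so that identity deserves the same care as the boundary equations, and the degenerate cases $n\le 2$ should indeed be checked directly as the paper also does.
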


Note that Corollary~\ref{L-tri} holds for $n =1$, $2$.
Indeed, if $n=1$ then $L(x) \equiv L^{-1}(x) \equiv x \pmod{x^2 +x}$.
If $n=2$ and $L$ is a PP of $\F_{4}$,
then $L(x) \equiv L^{-1}(x) \equiv bx^2 \pmod{x^4 +x}$.

The necessary and sufficient condition for $L$ permuting $\F_{2^n}$
can also be obtained by \cite[Proposition 2]{HK10}.
This proposition also shown that $M_0 = (2^n - (-1)^n)/3$,
where $M_0$ is the number of $c \in \F_{2^n}^{*}$
such that $P_c(x) = x^3 +x +c$ has no root in $\F_{2^n}$.
Since $L(b^{1/2}x)= b^{2}x(x^3 +x + ab^{-3/2})$, $L$ is a PP of $\F_{2^n}$
if and only if $P_c$ has no root in $\F_{2^n}$, where $c =ab^{-3/2}$.
Hence the number of $a, b \in \F_{2^n}^{*}$ such that $L$ permutes $\F_{2^n}$
is equal to $(2^n -1)(2^n - (-1)^n)/3$, which implies the probability of
$L$ permuting $\F_{2^n}$ is almost $1/3$.

In Corollay~\ref{L-tri}, let $a=b=1$. Then
$S_{i}=S_{i-1}+S_{i-2}$ by~\eqref{ci-1}, and so
$(S_{-1}, S_{0}, S_{1}, S_{2}, S_{3}, S_{4},\ldots) = (0,1,1,0,1,1,\ldots)$.
Thus we obtain the following result.
\begin{cor}
Let $L(x) = x^4 +x^2 +x$. Then $L$ is a PP of $\F_{2^n}$ if and only if
$n \equiv 1, 2 \pmod{3}$. In this case, the inverse of $L$ on $\F_{2^n}$ is
$L^{-1}(x) = \sum_{i=0}^{n-1} x^{2^i}$ with $i \not\equiv 2-n \pmod{3}$.
\end{cor}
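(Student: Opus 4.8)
The plan is to specialize Corollary~\ref{L-tri} to the case $a = b = 1$ and then read off everything from the period-$3$ behaviour of the sequence $\{S_i\}$ already exhibited just before the statement. Over $\F_{2^n}$ the recurrence \eqref{ci-1} becomes $S_i = S_{i-1} + S_{i-2}$, so starting from $(S_{-1}, S_0) = (0,1)$ the values cycle as $0,1,1,0,1,1,\dots$; concretely $S_i = 0$ if and only if $i \equiv 2 \pmod 3$, and $S_i = 1$ otherwise. I would record at the outset the fact that every $S_i$ lies in $\{0,1\} \subseteq \F_2$, so that $S_i^{2^k} = S_i$ for all $k \geq 0$; this is what collapses the general formula in Corollary~\ref{L-tri} to a clean statement.

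First I would establish the permutation criterion. By Corollary~\ref{L-tri}, $L$ is a PP of $\F_{2^n}$ precisely when $S_n + S_{n-2}^2 = 1$. Since $S_{n-2} \in \{0,1\}$ we have $S_{n-2}^2 = S_{n-2}$, and the recurrence gives $S_n + S_{n-2} = S_{n-1}$ in $\F_2$. Hence the condition reduces to $S_{n-1} = 1$, which by the description of $\{S_i\}$ holds if and only if $n-1 \not\equiv 2 \pmod 3$, that is, $n \equiv 1, 2 \pmod 3$.

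Next I would simplify the inverse formula. Setting $a = 1$ in Corollary~\ref{L-tri} and using $S_j^{2^k} = S_j$, the coefficient of $x^{2^i}$ becomes $S_{n-2-i} + S_i \in \F_2$ for $0 \le i \le n-1$ (here $n-2-i$ ranges over $-1, \dots, n-2$, all admissible indices). This coefficient equals $1$ exactly when precisely one of $S_i$, $S_{n-2-i}$ vanishes. Using $S_j = 0 \iff j \equiv 2 \pmod 3$, the first case gives $i \equiv 2 \pmod 3$ and the second gives $n-2-i \equiv 2$, i.e. $i \equiv n-1 \pmod 3$. Since we are in the case $n \not\equiv 0 \pmod 3$, these are two distinct residue classes, so they are disjoint and their union covers two of the three classes; the remaining class, on which the coefficient is $0$, is $i \equiv 2 - n \pmod 3$. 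This yields exactly $L^{-1}(x) = \sum_{i=0}^{n-1} x^{2^i}$ taken over $i \not\equiv 2-n \pmod 3$.

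The only real bookkeeping lies in the last step: confirming that the two ``exactly one vanishes'' cases pick out two distinct residues mod $3$ and identifying the excluded residue as $2 - n$. This is elementary modular arithmetic, streamlined by the observation that $n \not\equiv 0$ forces $n - 1 \not\equiv 2 \pmod 3$, so the condition $i \equiv 2$ automatically excludes $i \equiv n-1$ and conversely; the leftover class is then pinned down by $2 + (n-1) + (2-n) \equiv 0 \pmod 3$. I expect no genuine obstacle here, only the need to keep the residue classes straight.
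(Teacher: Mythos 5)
Your proposal is correct and follows exactly the paper's route: the paper derives this corollary by setting $a=b=1$ in Corollary~\ref{L-tri} and observing that $\{S_i\}$ becomes the periodic sequence $0,1,1,0,1,1,\ldots$ (so $S_i=0$ iff $i\equiv 2\pmod 3$), leaving the residue-class bookkeeping implicit. You have simply filled in those omitted details, and your computations (reducing the PP criterion to $S_{n-1}=1$ and identifying the excluded class via $2+(n-1)+(2-n)\equiv 0\pmod 3$) check out.
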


\vspace{-6pt}
\subsection{Inverses of non-linearized trinomials}

In Table~\ref{T1}, there are only two infinite classes of
non-linearized permutation trinomials. One is the polynomial
$x^5 + ax^3 +5^{-1}a^2x$, where $a \in \F_{q}$ and $q \equiv \pm 2 \pmod{5}$.
It is actually the Dickson PP $D_5(x, -5^{-1}a)$,
and by Lemma~\ref{dickson-1} its inverse on $\F_{q}$ is $D_m(x, -(5^{-1}a)^5)$, where $m =(3q^2-2)/5$ (by the proof of Theorem~\ref{monomials}). The other is as follows.

\begin{lem}[\!\!{\cite[Table~7.1]{FF}}]\label{lem-f=x5x3x}
Let $f(x)=x^5 -2ax^3 +a^2x$, where $a \in \F_{5^n}^*$ and $n \geq 1$.
Then $f$ is a PP of $\F_{5^n}$ if and only if $a^{(q-1)/2} =-1$.
\end{lem}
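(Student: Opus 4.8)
The plan is to work directly with the factored form $f(x) = x(x^2 - a)^2$ (so $q = 5^n$) and to analyze when the equation $f(x_1) = f(x_2)$ forces $x_1 = x_2$. I would dispatch the easy direction first: if $a = b^2$ is a nonzero square then $f(x) = x(x-b)^2(x+b)^2$ vanishes at the three distinct points $0, b, -b$ (distinct because $b \neq 0$ and the characteristic is odd), so $f$ is not injective. Contrapositively, whenever $f$ is a PP the element $a$ must be a non-square, i.e. $a^{(q-1)/2} = -1$. The substance of the lemma is the converse, and the plan is to assume $a$ is a non-square and prove $f$ injective on $\F_{5^n}$.

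For the converse I would form the difference $f(x_1) - f(x_2)$ and exploit the characteristic-$5$ identity $x_1^5 - x_2^5 = (x_1 - x_2)^5$, which lets me factor out $x_1 - x_2$ cleanly. A collision $f(x_1) = f(x_2)$ with $x_1 \neq x_2$ then reduces to the symmetric quartic condition
\[
(x_1 - x_2)^4 - 2a(x_1^2 + x_1 x_2 + x_2^2) + a^2 = 0 .
\]
Passing to the difference/midpoint coordinates $\delta = x_1 - x_2 \neq 0$ and $t = (x_1 + x_2)/2$, where $x_1^2 + x_1 x_2 + x_2^2 = 3t^2 + \delta^2/4$, and simplifying with the arithmetic of $\F_5$ (where $6 \equiv 1$, $2^{-1} \equiv 3$, and $-3 \equiv 2$), I expect the condition to collapse to the key identity
\[
a\,t^2 = (\delta^2 + a)^2 .
\]

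The final step is a quadratic-residue argument against this identity. When $a$ is a non-square the right-hand side is always a square (or zero), whereas for $t \neq 0$ the left-hand side $a t^2$ is a nonzero non-square, an impossibility; the remaining case $t = 0$ would require $\delta^2 = -a$, which has no solution because $-1$ is a square in $\F_{5^n}$ (as $q \equiv 1 \pmod 4$) while $a$ is not, so $-a$ is a non-square. Hence no collision with $x_1 \neq x_2$ can occur and $f$ permutes $\F_{5^n}$. I expect the main obstacle to be recognizing that the apparently generic biquadratic in the middle step factors as the perfect square $(\delta^2 + a)^2$ once multiplied through by $a$; this is precisely the characteristic-$5$ coincidence ($-3 \equiv 2 \bmod 5$) that makes the residue count decisive, and without spotting it one might be tempted to fall back on counting points of an auxiliary curve. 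As a cross-check, the same conclusion follows from the $x^r h(x^s)$ criterion recalled earlier with $r = 1$, $s = 2$, $h(y) = (y-a)^2$, which reduces the problem to showing $g(x) = x(x-a)^4$ permutes the group of squares $U_{(q-1)/2}$; but the direct collision analysis above seems both shorter and more self-contained.
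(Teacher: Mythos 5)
Your argument is correct, and it is genuinely different from what the paper does: the paper offers no proof at all for this lemma, simply citing the classical classification of normalized quintic PPs in \cite[Table~7.1]{FF} (going back to Dickson). Your collision analysis is a valid self-contained replacement. The forward direction via the factorization $f(x)=x(x-b)^2(x+b)^2$ is immediate, and I verified the converse: in characteristic $5$ one has $f(x_1)-f(x_2)=(x_1-x_2)\bigl[(x_1-x_2)^4-2a(x_1^2+x_1x_2+x_2^2)+a^2\bigr]$, and with $\delta=x_1-x_2$, $t=(x_1+x_2)/2$, $x_1^2+x_1x_2+x_2^2=3t^2+\delta^2/4$, the bracket becomes $\delta^4-6at^2-3a\delta^2+a^2$, which over $\F_5$ (using $6\equiv 1$ and $-3\equiv 2$) is exactly $(\delta^2+a)^2-at^2$. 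For $t\neq 0$ the quantity $at^2$ is a nonzero non-square while $(\delta^2+a)^2$ is a square or zero; for $t=0$ one would need $\delta^2=-a$, impossible since $-1$ is a square in $\F_{5^n}$ ($5^n\equiv 1\bmod 4$) and $a$ is not. What your approach buys is a short, elementary, and verifiable proof where the paper relies on an external table; what the citation buys the authors is uniformity, since the same table supplies all the other entries of their Table~1 as well. Your parenthetical cross-check via the $x^rh(x^s)$ criterion is also consistent with the machinery the paper reviews in Section~II, though as you say the direct argument is shorter.
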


The inverse of $f$ was given in~\cite{KLi18-1} by solving equations over finite fields.

\begin{thm}[\!\!{\cite[Lemma 4.9]{KLi18-1}}]\label{x5-1-KLi}
The inverse of $f$ in Lemma~\ref{lem-f=x5x3x} on $\F_{5^n}$ is
\[
f^{-1}(x)= x \bigg(\frac{(a/x^{2})^{\frac{5^n-1}{4}}}{1-(a/x^{2})^{\frac{5^n-1}{4}}}
\sum_{i=0}^{n-1}(a/x^{2})^{-\frac{5^{i+1} -1}{4}}(1/x^{2})^{5^{i}}\bigg)^2,
\]
where $x \in \F_{5^n}^{*}$ and $f^{-1}(0) =0$.
\end{thm}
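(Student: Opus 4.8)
The plan is to prove the formula by solving the equation $f(w)=c$ directly for each fixed $c\in\F_{5^n}$ and reducing it to the inversion of a linearized binomial, so that Theorem~\ref{L-bi} can be applied. The starting point is the factorization $f(x)=x(x^2-a)^2$, which is immediate from $x^5-2ax^3+a^2x=x(x^4-2ax^2+a^2)$. Since $a$ is a nonsquare (this is the criterion $a^{(5^n-1)/2}=-1$ of Lemma~\ref{lem-f=x5x3x} under which $f$ is a PP), the equation $x^2=a$ has no solution in $\F_{5^n}$, so $x^2-a\neq 0$ for every $x$; in particular $f(w)=0$ forces $w=0$, which settles the case $c=0$ and gives $f^{-1}(0)=0$. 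For $c\neq 0$, I would work with the unique preimage $w=f^{-1}(c)$, which exists and is unique because $f$ is a bijection, and show it equals the stated expression.

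The key step is the substitution $t:=1/(w^2-a)$, which is well defined by the remark above. From $c=w(w^2-a)^2$ one gets $w/c=1/(w^2-a)^2=t^2$, hence $w=ct^2$. Substituting $w^2=c^2t^4$ into $w^2-a=1/t$ gives $c^2t^4-a=1/t$, i.e. $c^2t^5-at=1$, and dividing by $c^2$ yields the linearized binomial equation $t^5-(a/c^2)t=1/c^2$. Writing $b:=a/c^2$ and $L_b(z):=z^5-bz$, this reads exactly $t=L_b^{-1}(1/c^2)$. Evaluating the formula of Theorem~\ref{L-bi} with $q=5$, $r=1$, $d=\gcd(n,1)=1$, so that $\N_{5^n/5}(b)=b^{(5^n-1)/4}$ and the sum runs over $0\le i\le n-1$, reproduces precisely the inner parenthesis of the stated formula; then $w=ct^2$ is the asserted value of $f^{-1}(c)$ once the explicit form of $t$ is substituted.

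Before invoking Theorem~\ref{L-bi} I must verify that $L_b$ is indeed a permutation of $\F_{5^n}$, i.e. that $b^{(5^n-1)/4}\neq 1$, and this is the one place where the nonsquare hypothesis on $a$ is essential; I expect it to be the main (though short) obstacle. I would argue by squaring: since $c\neq 0$ gives $c^{5^n-1}=1$ and the nonsquare condition gives $a^{(5^n-1)/2}=-1$, one computes $b^{(5^n-1)/2}=a^{(5^n-1)/2}/c^{5^n-1}=-1$. Hence $b^{(5^n-1)/4}$ is a square root of $-1$, so it is neither $0$ nor $1$; this simultaneously guarantees that $L_b$ is a PP and that the denominator $1-b^{(5^n-1)/4}$ appearing in the formula does not vanish.

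Finally I would assemble the pieces into a clean verification, which avoids any worry about reversibility of the substitutions. For $c\neq 0$, set $t=L_b^{-1}(1/c^2)$ as given by Theorem~\ref{L-bi}, so that $c^2t^5-at=1$, that is $t(c^2t^4-a)=1$ and therefore $(c^2t^4-a)^2=1/t^2$. Then $w:=ct^2$ satisfies $f(w)=ct^2\big((ct^2)^2-a\big)^2=ct^2(c^2t^4-a)^2=ct^2\cdot t^{-2}=c$, so $w$ is a preimage of $c$ under $f$; since $f$ is a bijection, this $w$ is $f^{-1}(c)$, which is exactly the claimed expression. Together with $f^{-1}(0)=0$ this proves the theorem.
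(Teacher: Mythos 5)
Your proposal is correct. Note first that the paper itself gives no proof of Theorem~\ref{x5-1-KLi}: it is quoted from \cite[Lemma~4.9]{KLi18-1}, with the remark that the inverse was obtained there ``by solving equations over finite fields'' --- which is precisely what you do. Your chain of steps checks out: the factorization $f(x)=x(x^2-a)^2$; the observation that $a$ being a nonsquare makes $x^2-a$ nowhere zero, so $f^{-1}(0)=0$; the substitution $t=1/(w^2-a)$ turning $f(w)=c$ into the linearized equation $t^5-(a/c^2)t=1/c^2$; the norm computation $b^{(5^n-1)/2}=a^{(5^n-1)/2}c^{-(5^n-1)}=-1$, which shows both that $\N_{5^n/5}(b)\neq 1$ (so Theorem~\ref{L-bi} applies) and that the denominator $1-b^{(5^n-1)/4}$ is nonzero; and the closing direct verification $f(ct^2)=ct^2(c^2t^4-a)^2=c$, which makes the argument independent of whether the substitutions are reversible. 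The only cosmetic wrinkle is the case $n=1$, where Theorem~\ref{L-bi} as stated requires $1\le r\le n-1$ and hence does not literally apply; there $t^5-bt=(1-b)t$ over $\F_5$ and the claimed formula reduces to $t=(1/c^2)/(1-b)$, so the conclusion still holds, but a one-line remark covering that degenerate case would make the proof airtight. Compared with the paper's own (different) derivation of the polynomial form in Theorem~\ref{x5-1} via binomial coefficients, your route is shorter and more conceptual, at the cost of producing the answer as a rational expression on $\F_{5^n}^{*}$ rather than as an explicit polynomial.
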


By employing the method in the next section,
we obtain the explicit polynomial form of $f^{-1}$ as follows.

\begin{thm}\label{x5-1}
The inverse of $f$ in Lemma~\ref{lem-f=x5x3x} on $\F_{5^n}$ is
\[
f^{-1}(x)=\sum_{0 \leq i \leq j \leq n-1}
\raisebox{-2pt}{$
a^{-\frac{5^n +5^{i+1} +5^{j+1} -3}{4}} b_{ij} x^{\frac{5^n +5^i +5^j -1}{2}}
$,}
\]
where $b_{ij} = 1$ if $i<j$ and $b_{ij} = 3$ if $i=j$.
\end{thm}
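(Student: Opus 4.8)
The plan is to feed the factorization of $f$ into the coefficient formula of Theorem~\ref{coe-f-1} and then reduce the whole computation to a handful of binomial coefficients modulo $5$. Writing $q=5^n$, the first move is to observe that
\[
f(x)=x^5-2ax^3+a^2x=x(x^2-a)^2,
\]
so for each $k$
\[
f(x)^k=x^k(x^2-a)^{2k}=\sum_{t=0}^{2k}\binom{2k}{t}(-a)^{2k-t}\,x^{k+2t}.
\]
By Theorem~\ref{coe-f-1}, the coefficient of $x^{q-1-k}$ in $f^{-1}$ is the coefficient of $x^{q-2}$ in $f(x)^k \bmod (x^q-x)$, and a monomial $x^{k+2t}$ contributes precisely when $k+2t\equiv q-2\equiv -1\pmod{q-1}$. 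Thus I would first rewrite the target coefficient as $\sum_t\binom{2k}{t}(-a)^{2k-t}$ over the admissible $t$.

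Next I would pin down the admissible exponents. The congruence $k+2t\equiv-1\pmod{q-1}$ forces $k$ to be odd and $k+2t=m(q-1)-1$ for an integer $m$; since $0\le t\le 2k\le 2(q-1)$, a priori $m\in\{1,2,3,4,5\}$, and $m=5$ is excluded because it would force $k=q-1$, which is even. So only $m\in\{1,2,3,4\}$ occur, giving the four exponents $t_m=\tfrac{m(q-1)-1-k}{2}$, with consecutive ones differing by $\tfrac{q-1}{2}$; these are the four classes of binomial coefficients $\binom{2k}{t_m}$ that must be evaluated.

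The key structural simplification comes from the permutation hypothesis $a^{(q-1)/2}=-1$ of Lemma~\ref{lem-f=x5x3x}. Because $t_{m+1}-t_m=\tfrac{q-1}{2}$ is even, all the signs $(-1)^{2k-t_m}=(-1)^{t_1}$ coincide, while the powers of $a$ satisfy $a^{2k-t_{m+1}}=a^{2k-t_m}\,a^{-(q-1)/2}=-a^{2k-t_m}$. Hence every contribution shares a single power $a^{E}$ (with $E\equiv-\tfrac{q+5^{i+1}+5^{j+1}-3}{4}\pmod{q-1}$ on the relevant exponents), and the coefficient collapses to
\[
(-1)^{t_1}a^{E}\Big(\binom{2k}{t_1}-\binom{2k}{t_2}+\binom{2k}{t_3}-\binom{2k}{t_4}\Big).
\]
I would stress that this alternating-sign pattern is exactly what ``$a$ not a square'' buys us; without $a^{(q-1)/2}=-1$ the four terms would not assemble into a single alternating sum.

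The last and hardest step is to evaluate this alternating sum modulo $5$ by Lucas' theorem. Using $\tfrac{5^n-1}{4}=1+5+\cdots+5^{n-1}$, I would write out the base-$5$ digits of $2k$ and of each $t_m$ and read the binomial coefficients off digitwise. The aim is twofold: to show that the sum vanishes for every $k$ except those of the form $k=\tfrac{q-1-5^i-5^j}{2}$ (equivalently $2k=5^n-1-5^i-5^j$, whose base-$5$ digits are all $4$ except two $3$'s at positions $i\ne j$, or a single $2$ at position $i=j$), and to show that for these $k$ the terms $m=2,3,4$ all drop out by a digit overflow, leaving $(-1)^{t_1}\binom{2k}{t_1}\equiv b_{ij}\pmod 5$ with $b_{ij}=1$ when $i<j$ and $b_{ij}=3$ when $i=j$. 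I expect this digit-level analysis --- both the positive evaluation of the surviving binomial and the complete cancellation for all other exponents --- to be the main obstacle. Once it is done, setting $\ell=q-1-k=\tfrac{q+5^i+5^j-1}{2}$ and summing over $0\le i\le j\le n-1$ yields exactly the claimed formula for $f^{-1}$.
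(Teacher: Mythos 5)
Your setup is sound and is essentially the paper's own: you invoke Theorem~\ref{coe-f-1}, expand $f(x)^k=x^k(x^2-a)^{2k}$ binomially, identify the four admissible exponents $t_m$ with $k+2t_m=m(q-1)-1$, and use $a^{(q-1)/2}=-1$ together with the evenness of $(q-1)/2$ to collapse each coefficient into a single power of $a$ times the alternating sum $\binom{2k}{t_1}-\binom{2k}{t_2}+\binom{2k}{t_3}-\binom{2k}{t_4}$. This matches Section~IV of the paper (your $k$ is the paper's $r_i=q-1-i$ and your $t_1$ is the paper's $m$), and your parity argument excluding even $k$ and the case $m=5$ is correct.

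The gap is that you stop exactly where the real work begins. Evaluating that alternating sum modulo $5$ is not a routine digitwise Lucas computation: for $k$ outside the surviving range the individual binomial coefficients are in general \emph{not} separately zero mod $5$, and the vanishing comes from cancellation \emph{between} terms of the sum. Concretely, with $m=t_1$, one must prove $\binom{5m+3}{m}\equiv\binom{5m+2}{m-1}\pmod 5$ on one range and $\binom{5m+3}{m}+\binom{5m+2+\frac{q-1}{2}}{m-1+\frac{q-1}{2}}\equiv\binom{5m+2}{m-1}\pmod 5$ on another (Corollaries~\ref{em02=0} and~\ref{em023=0}), which the paper gets from identities such as $mA=(5m+3)B$, hence $2mA\equiv B\pmod 5$, combined with a full classification (Theorems~\ref{Aneq0}, \ref{bico-id-1}, \ref{Cneq0}) of which $m$ make each coefficient nonvanishing via monotone digit conditions and carry propagation in base $5$; one also needs Corollary~\ref{em1=0} to kill the $t_2$ term throughout, and Theorem~\ref{Aeq13} to extract the values $1$ and $3$ of the surviving coefficient. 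None of this appears in your proposal --- you explicitly defer ``the main obstacle'' --- so what you have is the reduction of Section~IV, not a proof of Theorem~\ref{x5-1}. (A smaller imprecision: for the surviving $k$ the terms $t_3,t_4$ drop out because the lower index exceeds the upper index, not by a Lucas digit overflow; only the $t_2$ term requires the mod-$5$ vanishing argument.)
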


\begin{rem}
Theorem~\ref{x5-1} can be obtained from Theorem~\ref{x5-1-KLi} and
\[
\Big(\sum_{1 \leq i \leq n} x_i \Big)^2
= \sum_{1 \leq i \leq n} x_i^2 +\sum_{1 \leq i < j \leq n} 2 x_i x_j,
\]
where $x_i = a^{-\frac{5^{i}-1}{4}}x^{\frac{5^{i-1}-1}{2}}$.
However, we will demonstrate our method of deducing Theorem~\ref{x5-1} in the next sections.
The main reason is that our method can also be used to find the inverses of other PPs of small degree; see for example~\cite{ZhengWLW19}.
\end{rem}

In summary, all inverses of normalized PPs of degree $\leq 5$ are obtained.
We list these PPs and their inverses in Table~\ref{T1}.

\section{The coefficients of inverse of a PP}

In this section, we will write the coefficients of inverse of the PP $f$
in Lemma~\ref{lem-f=x5x3x} in terms of binomial coefficients,
by employing the following formula~\eqref{f-1} presented first in~\cite{MR-1}.

\begin{thm}[See \cite{MR-1}]\label{coe-f-1}
  Let $f \in \F_q[x]$ be a PP of\, $\F_q \color{blue}{(q \ge 3)}$
  such that $f(0)=0$, and let
\[
   f(x)^{q-1-i} \equiv \sum_{\color{blue}{ 1 \leq k \leq q-1}} b_{ik}x^k\pmod{x^q-x},
\]
where $i = 1, 2, \ldots, q-2$. Then the inverse of $f$ on $\F_q$ is
\begin{equation}\label{f-1}
f^{-1}(x)=\sum_{1 \leq i \leq q-2} b_{i,q-2} x^i.
\end{equation}
\end{thm}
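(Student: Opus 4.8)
The plan is to prove the formula \eqref{f-1} by the classical coefficient-extraction technique for functions on $\F_q$, combined with the change of variables supplied by the bijectivity of $f$. The one tool that drives the whole argument is the power-sum identity over $\F_q$: for an integer $m \ge 0$ one has $\sum_{c \in \F_q} c^m = -1$ when $m$ is a positive multiple of $q-1$, and $\sum_{c \in \F_q} c^m = 0$ otherwise, where we adopt the convention $0^0 = 1$ so that the case $m=0$ gives the sum $q\cdot 1 = 0$. I would state this identity first and use it twice.

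First I would establish the interpolation formula for the reduced polynomial of an arbitrary map $g\colon \F_q \to \F_q$. Writing $g(x) \equiv \sum_{j=0}^{q-1} a_j x^j \pmod{x^q - x}$, so that $g(c)=\sum_j a_j c^j$ for every $c\in\F_q$, I would multiply by $c^{q-1-i}$, sum over all $c \in \F_q$, and apply the power-sum identity to the inner sum $\sum_{c} c^{\,q-1-i+j}$. For $1 \le i \le q-1$ the exponent $q-1-i+j$ is a positive multiple of $q-1$ only at $j=i$ (the value $j=i+(q-1)$ falls outside $0\le j\le q-1$), so exactly one term survives and yields $a_i = -\sum_{c \in \F_q} g(c)\, c^{q-1-i}$.

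Next I would specialize $g = f^{-1}$ and use that $f$ permutes $\F_q$. Substituting $c = f(d)$ and invoking $f^{-1}(f(d)) = d$ converts the formula into $a_i = -\sum_{d \in \F_q} d\, f(d)^{q-1-i}$. Since the congruence defining $b_{ik}$ means $f(d)^{q-1-i} = \sum_{k} b_{ik} d^{k}$ as functions on $\F_q$, I would substitute this and interchange the order of summation to get $a_i = -\sum_{k} b_{ik} \sum_{d \in \F_q} d^{\,k+1}$. A second application of the power-sum identity collapses the inner sum: for $1 \le k \le q-1$ the exponent $k+1$ is a positive multiple of $q-1$ precisely at $k = q-2$ (note $k+1=q$ is $\equiv 1 \pmod{q-1}$, hence contributes nothing), giving $a_i = b_{i,q-2}$ for each $i$ with $1 \le i \le q-2$.

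Finally I would dispose of the two boundary coefficients outside the stated range: the constant term is $a_0 = f^{-1}(0) = 0$ because $f(0)=0$ forces $f^{-1}(0)=0$, and the top coefficient is $a_{q-1} = -\sum_{c} f^{-1}(c) = -\sum_{d} d = 0$, where the hypothesis $q \ge 3$ is exactly what makes $\sum_{d\in\F_q} d = 0$. Combining these with the computation of the interior coefficients yields $f^{-1}(x) = \sum_{1 \le i \le q-2} b_{i,q-2}\, x^i$. I expect the main obstacle to be purely the bookkeeping around the power-sum identity, namely verifying that in each sum no extra multiple of $q-1$ enters the relevant range of exponents, and handling the $0^0$ convention and the boundary indices $i=0,q-1$ consistently; the role of $q \ge 3$ should be flagged explicitly, since for $q=2$ one has $\sum_{d} d \neq 0$ and the clean statement would break down.
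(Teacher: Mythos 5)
Your proof is correct and takes essentially the same route as the paper's: both arguments reduce to the identity $c_i = -\sum_{d \in \F_q} d\, f(d)^{q-1-i}$ (obtained by exploiting bijectivity of $f$ via the substitution $c = f(d)$), then insert the expansion $f(d)^{q-1-i} = \sum_{k} b_{ik} d^k$ and collapse the double sum with the power-sum identity $\sum_{a \in \F_q} a^{t} = -1$ exactly when $t$ is a positive multiple of $q-1$. The only cosmetic differences are that you derive the coefficient formula by direct power-sum extraction while the paper expands the Lagrange interpolation polynomial $\sum_{a} a\bigl(1-(x-f(a))^{q-1}\bigr)$ --- equivalent computations --- and that you handle the boundary coefficients $a_0$ and $a_{q-1}$ slightly more explicitly than the paper does.
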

\begin{proof}
Assume $f^{-1}(x)=\sum_{i=1}^{q-2} c_i x^i$. From the Lagrange interpolation formula, we have
\begin{align*}
f^{-1}(x)
& =\sum_{a \in \F_q} a \big(1-(x-f(a))^{q-1}\big) \\
& = \sum_{a \in \F_q^*} a \Big(-\sum_{1 \leq i \leq q-1}(-1)^i(-f(a))^{q-1-i}x^i\Big) \\
& = \sum_{1 \leq i \leq q-1}\Big(-\sum_{a \in \F_q^*} a f(a)^{q-1-i}\Big)x^i.
\end{align*}
Hence for $1 \leq i \leq q-2$, we have
\begin{align*}
c_i & = -\sum_{a \in \F_q^*} a f(a)^{q-1-i}
      = -\sum_{a \in \F_q^*} a\sum_{1 \leq k \leq q-1} b_{ik}a^k  \\
    & = -\sum_{1 \leq k \leq q-1} b_{ik}\sum_{a \in \F_q^*} a^{k+1}
      = b_{i,q-2},
\end{align*}
where the last identity follows from

\[
\sum_{a \in \F_{q}}a^t
 = \begin{cases}
  -1 & \text{if $t = q-1$,}\\
  ~~0  & \text{if $t = 0$, $1$, $\ldots$, $q-2$.}
  \end{cases}
\]
The proof is completed.
\end{proof}
\begin{rem}
Theorem~\ref{coe-f-1} is the same as the one in~\cite{MR-1, Wang-1}.
All these results are essentially part of Theorem~2 in~\cite{MMW16}.
For the reason of completeness, we include a proof by using the Lagrange
interpolation formula.
\end{rem}

Next we use Theorem~\ref{coe-f-1} to calculate the coefficients of the inverse of $f$ in Lemma~\ref{lem-f=x5x3x}.
Recall that $f(x)=x^5 -2ax^3 +a^2x$, where $a \in \F_{5^n}$ and $a^{(q-1)/2}=-1$.
Let $q=5^n$ and $r_i=q-1-i$, where $1 \leq i \leq q-2$. Then
\begin{equation}\label{x5x3x^ri}
\begin{split}
  f(x)^{r_i} & = x^{r_i}(x^2 -a)^{2r_i}  \\
  & = \sum_{0 \leq j \leq 2r_i}\binom{2r_i}{j}(-a)^{2r_i-j}x^{r_i+2j}.
\end{split}
\end{equation}
The degree of $f(x)^{r_i}$ is $5r_i$, and $5 \leq 5r_i < 4(q-1)+q-2$.
By~\eqref{f-1}, the coefficient $b_{i,q-2}$ of $f^{-1}$ equals the sum
of the coefficients of $x^{k(q-1)+(q-2)}$ ($k=0,1,2,3$) in~\eqref{x5x3x^ri}.
If $i$ is even, then $r_i+2j$ is even, and so
the coefficients of odd powers of $x$ in~\eqref{x5x3x^ri} are all $0$.
Also note $k(q-1)+(q-2)$ is odd. We have $b_{i,q-2} =0$ for even $i$. If $i$ is odd, then
\[
b_{i,q-2}=\sum_{0 \leq k \leq 3}
\binom{2r_i}{(k(q-1)+i-1)/2}(-a)^{2r_i-\frac{k(q-1)+i-1}{2}}.
\]
Let $i=2m+1$. Then $0 \leq m \leq (q-3)/2$ and
\begin{equation}\label{bi-sum}
\begin{split}
b_{i,q-2}
& = \sum_{0 \leq k \leq 3}
\binom{2q-4m-4}{k\frac{q-1}{2} +m}(-a)^{-k\frac{q-1}{2}-5m-2} \\
& = \sum_{0 \leq k \leq 3}
\binom{2q-4m-4}{k\frac{q-1}{2} +m}(-1)^{k+m}a^{-5m-2},
\end{split}
\end{equation}
where the last identity follows from the fact $a^{(q-1)/2}=-1$ and $q=5^n$.
If $2q-4m-4 < k(q-1)/2  +m$, i.e.,
\[
m > ((4-k)q +k-8)/10,
\]
then $\binom{2q-4m-4}{k\frac{q-1}{2} +m} =0$.
Thus a direct computation reduces~\eqref{bi-sum}~to
\begin{equation}\label{bi=e-mk}
b_{i,q-2} =
\begin{cases}
0                            & \makebox[60pt][r]{if~~$4T+1 < m \leq (q-3)/2$,}   \\
e_{m0}                       & \makebox[60pt][r]{if~~$3T < m \leq 4T+1$,}  \\
e_{m0}+e_{m1}                & \makebox[60pt][r]{if~~$2T < m \leq 3T$,}  \\
e_{m0}+e_{m1}+e_{m2}         & \makebox[60pt][r]{if~~$T < m \leq 2T$,}   \\
e_{m0}+e_{m1}+e_{m2}+e_{m3}  & \makebox[60pt][r]{if~~$0\leq m \leq  T$,} \\
\end{cases}
\end{equation}
where $T = (q-5)/10 = (5^{n-1} -1)/2$ and
\begin{equation}\label{e-mk}
e_{mk} = \binom{2q-4m-4}{k\frac{q-1}{2} +m}(-1)^{k+m}a^{-5m-2}, k=0,1,2,3.
\end{equation}
Now the key of deducing Theorem~\ref{x5-1} is to find the values of binomial coefficients above.
\section{Explicit values of binomial coefficients}

In this section, we first give the explicit values of binomial coefficients in~\eqref{e-mk},
and then prove Theorem~\ref{x5-1}.
In order to remove the multiples of $q$ in these binomial coefficients, we need two lemmas.

\begin{lem}[Lucas' theorem] For non-negative integers $n$, $k$ and a prime $p$, let
$n = n_{0}+n_{1}p+\cdots +n_{s}p^{s}$ and $k  = k_{0}+k_{1}p+\cdots +k_{s}p^{s}$
be their $p$-adic expansions, where $0 \le n_i$, $k_i \leq p-1$ for $i=0,1,\ldots,s$. Then
\[
 {\binom {n}{k}}\equiv \prod_{i=0}^{s}{\binom {n_{i}}{k_{i}}}\pmod{p}
\]
(with the convention $\binom {0}{0} =1$ and $\binom {n}{k} = 0$ if $n < k$).
In particular, $\binom {n}{k} \!\not\equiv 0\pmod{p}$
if and only if $n_i \geq k_i$~for~all~$i$.
\end{lem}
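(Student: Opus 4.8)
The plan is to prove Lucas' theorem by a coefficient comparison in a single polynomial identity over $\F_p$, which is cleaner than any direct manipulation of factorials. The starting point is the ``freshman's dream'': for every $i \geq 0$ one has
\[
(1+X)^{p^{i}} \equiv 1 + X^{p^{i}} \pmod{p}.
\]
To establish this I would first treat $i=0$ trivially and $i=1$ by noting that $\binom{p}{j} = \tfrac{p}{j}\binom{p-1}{j-1}$ is divisible by $p$ for $1 \leq j \leq p-1$, so every intermediate binomial coefficient of $(1+X)^{p}$ vanishes modulo $p$. The general case then follows by induction on $i$: one raises the congruence for $i-1$ to the $p$-th power and applies the case $i=1$ to the variable $X^{p^{i-1}}$.

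Next, using the $p$-adic expansion $n = \sum_{i=0}^{s} n_{i} p^{i}$, I would factor and reduce
\[
(1+X)^{n} = \prod_{i=0}^{s}\bigl((1+X)^{p^{i}}\bigr)^{n_{i}}
\equiv \prod_{i=0}^{s}\bigl(1 + X^{p^{i}}\bigr)^{n_{i}}
= \prod_{i=0}^{s}\sum_{k_{i}=0}^{n_{i}}\binom{n_{i}}{k_{i}} X^{k_{i} p^{i}} \pmod{p}.
\]
The decisive step is to compare the coefficient of $X^{k}$ on both sides. On the left it is exactly $\binom{n}{k}$. On the right, a monomial arising from the product has exponent $\sum_{i} k_{i} p^{i}$ with each $0 \leq k_{i} \leq n_{i} \leq p-1$; by the uniqueness of the base-$p$ representation of $k$, this exponent equals $k$ precisely when each $k_{i}$ is the $i$-th digit of $k$, so at most one term survives and its coefficient is $\prod_{i=0}^{s}\binom{n_{i}}{k_{i}}$. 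I expect this uniqueness argument to be the main point requiring care, since it is what collapses the expansion to a single product; it is also where the convention $\binom{n_{i}}{k_{i}}=0$ for $n_{i}<k_{i}$ enters, covering the case $k>n$ (indeed $k_{i}\leq n_{i}$ for all $i$ forces $k\leq n$). Matching the two coefficients yields the stated congruence.

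Finally, for the ``in particular'' clause I would observe that each factor $\binom{n_{i}}{k_{i}}$ with $0 \leq n_{i}, k_{i} \leq p-1$ is either zero (when $n_{i}<k_{i}$) or a quotient of products of integers all strictly less than $p$, hence a unit modulo $p$. Consequently the product $\prod_{i}\binom{n_{i}}{k_{i}}$ is nonzero modulo $p$ if and only if no factor vanishes, i.e.\ if and only if $n_{i} \geq k_{i}$ for all $i$, which is exactly the nonvanishing criterion.
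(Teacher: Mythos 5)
Your proof is correct and complete. Note, however, that the paper states Lucas' theorem as a classical lemma and supplies no proof of its own, so there is no in-paper argument to compare yours against. Your route --- establishing $(1+X)^{p^{i}}\equiv 1+X^{p^{i}}\pmod p$ by induction from the divisibility of the intermediate binomial coefficients of $(1+X)^{p}$, factoring $(1+X)^{n}$ along the base-$p$ digits of $n$, and extracting the coefficient of $X^{k}$ via uniqueness of the base-$p$ representation --- is the standard generating-function proof, and you handle the two delicate points correctly: the convention $\binom{n_i}{k_i}=0$ for $n_i<k_i$ absorbs the case $k>n$ (since $k_i\le n_i$ for all $i$ forces $k\le n$), and the observation that $p\nmid n_i!$ for $n_i\le p-1$ makes each surviving factor a unit modulo $p$, which yields the nonvanishing criterion.
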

\begin{lem}\label{bico-qmodp}
Let $q$ be a power of a prime $p$, and let $r$, $k$ be integers with $0 \leq k \leq q-1$. Then
\[
\binom {q+r}{k} \equiv \binom {r}{k} \pmod{p}.
\]
where $\binom {n}{k}=n(n-1)\cdots(n-k+1)/k!$.
\end{lem}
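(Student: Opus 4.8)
The plan is to peel off the contribution of $q$ from $\binom{q+r}{k}$ using Vandermonde's convolution and then annihilate the unwanted terms with Lucas' theorem. First I would record the consequence of Lucas' theorem for the top row $q=p^e$: since the $p$-adic expansion of $q$ consists of a single digit $1$ in position $e$ and zeros elsewhere, $\binom{q}{j}\equiv 0\pmod p$ for every $j$ with $1\le j\le q-1$, while $\binom{q}{0}=1$. This is the only place Lucas' theorem is needed.

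Next I would invoke Vandermonde's identity in the form
\[
\binom{q+r}{k}=\sum_{j=0}^{k}\binom{q}{j}\binom{r}{k-j}.
\]
The key point is that, although Vandermonde is usually stated for non-negative arguments, for fixed $k$ both sides are polynomials of degree $k$ in the upper argument, so the identity is a polynomial identity and therefore remains valid for every integer $r$ — in particular for the negative values of $r$ that arise when the lemma is applied to tops such as $2q-4m-4$ in the previous section. Since each $\binom{r}{k-j}$ is a generalized binomial coefficient of an integer and hence itself an integer, the congruence can be taken termwise.

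Finally, because the hypothesis $0\le k\le q-1$ forces every summation index $j$ in the range $1\le j\le k$ to satisfy $1\le j\le q-1$, the first step shows that all terms with $j\ge 1$ vanish modulo $p$, leaving only the $j=0$ term:
\[
\binom{q+r}{k}\equiv\binom{q}{0}\binom{r}{k}=\binom{r}{k}\pmod p.
\]

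The step I expect to require the most care is the passage of Vandermonde's identity to negative $r$: one must justify that the two sides agree as polynomials (equivalently, that they agree on the infinitely many non-negative integer arguments and hence identically), since the naive move of reducing the numerator and denominator of $\binom{q+r}{k}$ separately modulo $p$ is invalid precisely when $p\mid k!$. This same subtlety explains why the constraint $k\le q-1$ is indispensable — it guarantees that the surviving coefficients $\binom{q}{j}$ are exactly those that Lucas' theorem kills, so that no term with $\binom{q}{j}\not\equiv 0$ (such as $\binom{q}{q}=1$) ever enters the sum.
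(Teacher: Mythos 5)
Your proposal is correct and follows essentially the same route as the paper: expand $\binom{q+r}{k}$ by the Chu--Vandermonde identity and kill every term with $1\le j\le q-1$ using $\binom{q}{j}\equiv 0\pmod p$. Your extra care in justifying the identity for negative $r$ (as a polynomial identity in the upper argument) is a worthwhile detail that the paper's one-line proof leaves implicit, but it does not change the argument.
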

\begin{proof}
  By the Chu-Vandermonde identity, we have
  \begin{equation*}
  \binom{q+r}{k} = \sum_{i=0}^{k}\binom {q}{i}\binom {r}{k-i}
  \equiv \binom {r}{k} \pmod{p},
  \end{equation*}
where we use the fact $\binom{q}{i} \equiv 0 \pmod{p}$ for $1 \leq i \leq q-1$.
\end{proof}

In~\eqref{bi=e-mk} we defined $T=(q-5)/10$ with $q=5^{n}$.
Then for $0 \leq m \leq 4T+1$, applying Lemma~\ref{bico-qmodp} twice yields that
\begin{equation}\label{e-m0}
\begin{split}
\binom{2q-4m-4}{m}
& \equiv \binom{-4m-4}{m} \\
& \equiv (-1)^m \binom{5m+3}{m}  \pmod{5},
\end{split}
\end{equation}
where we use the fact
$\binom {-n}{m} = (-1)^m \binom {m+n-1}{m}$ for $m$, $n > 0$.
Similarly, for $0 \leq m \leq 3T$,
\begin{equation}\label{e-m1}
\binom{2q-4m-4}{\frac{q-1}{2} +m} \equiv
(-1)^{m} \binom{5m+3 +\frac{q-1}{2}}{m +\frac{q-1}{2}} \pmod{5},
\end{equation}
For $0 \leq m \leq 2T$, we have $q-4m-4 > 0$ and, by Lucas' theorem and Lemma~\ref{bico-qmodp},
\begin{equation}\label{e-m2}
\begin{split}
\binom{2q-4m-4}{q+m-1}
& \equiv \binom{q-4m-4}{m-1} \equiv \binom{-4m-4}{m-1}\\
& \equiv (-1)^{m-1} \binom{5m+2}{m -1} \pmod{5}.
\end{split}
\end{equation}
Similarly, for $0 \leq m \leq T$,
\begin{equation}\label{e-m3}
\binom{2q-4m-4}{3\frac{q-1}{2} +m} \equiv
(-1)^{m-1} \binom{5m+2 +\frac{q-1}{2}}{m -1 +\frac{q-1}{2}} \pmod{5}.
\end{equation}

Next we use Lucas' theorem to find the explicit value
of the last binomial coefficients in~\eqref{e-m0}-\eqref{e-m3}.

\begin{thm}\label{Aneq0}
Let $0 \leq m \leq 5^{n}-1$ with $n \geq 1$.  Write
\[
m = m_0 +m_1 5  +\cdots +m_{n-1} 5^{n-1}, ~~0 \le m_i \le 4.
\]
Then the following three statements are equivalent:
\begin{enumerate}[$(i)$]
  \item $\binom{5m+3}{m} \not\equiv 0 \pmod{5}$;
  \item $3 \geq m_0 \geq m_1 \geq \cdots \geq m_{n-1} \geq 0$;
  \item $m = \frac{5^{k_1} -1}{4} + \frac{5^{k_2} -1}{4} + \frac{5^{k_3} -1}{4}$,
     where $0 \leq k_1 \leq k_2 \leq k_3 \leq n$.
\end{enumerate}
\end{thm}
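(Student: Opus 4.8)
The plan is to establish the two equivalences (i)$\Leftrightarrow$(ii) and (ii)$\Leftrightarrow$(iii) separately, using base-$5$ digit bookkeeping as the common tool throughout.

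For (i)$\Leftrightarrow$(ii), the first step is to record the $5$-adic expansion of $5m+3$. Since $m = \sum_{i=0}^{n-1} m_i 5^i$ with each $m_i \leq 4$, multiplication by $5$ shifts every digit up by one place and the $+3$ fills the vacated units position, so $5m+3 = 3 + \sum_{i=1}^{n} m_{i-1} 5^i$; every coefficient here lies in $\{0,\ldots,4\}$, so this is the genuine base-$5$ expansion. Applying Lucas' theorem, $\binom{5m+3}{m} \not\equiv 0 \pmod 5$ holds if and only if each digit of $5m+3$ dominates the corresponding digit of $m$. Reading these inequalities off position by position gives $3 \geq m_0$ at the units place, $m_{i-1} \geq m_i$ at places $1 \leq i \leq n-1$, and the automatically satisfied $m_{n-1} \geq 0$ at place $n$. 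Together these are precisely condition (ii), so this direction is essentially immediate once the expansion is set up correctly.

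For (ii)$\Leftrightarrow$(iii), the key observation is that $\frac{5^k-1}{4} = 1 + 5 + \cdots + 5^{k-1}$ is the base-$5$ repunit carrying a $1$ in positions $0$ through $k-1$. Hence in the sum $\frac{5^{k_1}-1}{4} + \frac{5^{k_2}-1}{4} + \frac{5^{k_3}-1}{4}$ the coefficient of $5^i$ is exactly $|\{j : k_j > i\}|$; being a sum of three $0/1$ contributions it never exceeds $3$, so \emph{no carrying occurs} and these coefficients are literally the base-$5$ digits of the sum. Given $0 \leq k_1 \leq k_2 \leq k_3 \leq n$, the count $|\{j : k_j > i\}|$ is non-increasing in $i$ and bounded by $3$, which yields (iii)$\Rightarrow$(ii) at once.

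For the converse (ii)$\Rightarrow$(iii), I would reconstruct the exponents from the digit sequence by a conjugate-partition (level-set) recipe: set $k_3 = |\{i : m_i \geq 1\}|$, $k_2 = |\{i : m_i \geq 2\}|$, and $k_1 = |\{i : m_i \geq 3\}|$. Monotonicity of $(m_i)$ forces $0 \leq k_1 \leq k_2 \leq k_3 \leq n$, and since the nonzero digits occupy an initial segment of positions, one checks that $[k_j > i]$ equals $[m_i \geq 4-j]$, so the three indicators sum to $[m_i\geq 1]+[m_i\geq 2]+[m_i\geq 3]=m_i$ for $0\leq m_i\leq 3$; summing over $i$ then recovers $m = \sum_{j=1}^{3}\frac{5^{k_j}-1}{4}$. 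The only genuine subtlety in the whole argument is the no-carry verification in this (ii)$\Leftrightarrow$(iii) step: everything hinges on each digit staying at most $3$, which is exactly what lets both the repunit sum and its reconstruction be read directly off the base-$5$ representation. Once that bound is in place, the remaining work is elementary digit counting.
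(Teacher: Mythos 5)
Your proof is correct and follows essentially the same route as the paper: the equivalence (i)$\Leftrightarrow$(ii) via the base-$5$ expansion of $5m+3$ and Lucas' theorem is identical, and the repunit/no-carry observation for (iii)$\Rightarrow$(ii) matches the paper's inclusion $M_2\subseteq M_1$. The only difference is cosmetic: for (ii)$\Rightarrow$(iii) you give the uniform conjugate-partition formula $k_j=|\{i: m_i\geq 4-j\}|$, whereas the paper reaches the same assignment by enumerating cases according to how many strict inequalities occur in the chain $3\geq m_0\geq\cdots\geq m_{n-1}\geq 0$; your version is slightly cleaner but captures exactly the same bijection.
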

\begin{proof}
It is easy to obtain the $5$-adic expansions:
 \begin{equation}\label{5m+3}
 \begin{split}
   5m+3 &=   3  +m_0 5  +\cdots+m_{n-2}5^{n-1} +m_{n-1}5^{n}, \\
   m &= m_0  +m_1 5  +\cdots+m_{n-1}5^{n-1}.
 \end{split}
 \end{equation}
By Lucas' theorem, (i) is equivalent to (ii).
To show (ii) is equivalent to (iii), it suffices to prove $M_1 = M_2$, where
\[\begin{split}
  M_1 & = \{m: ~ 3 \geq m_0 \geq m_1 \geq \cdots \geq m_{n-1} \geq 0\}, \\
  M_2 & = \{\tfrac{5^{k_1} -1}{4} + \tfrac{5^{k_2} -1}{4} + \tfrac{5^{k_3} -1}{4}:
        ~ 0 \leq k_1 \leq k_2 \leq k_3 \leq n \}.
\end{split}\]
Since $(5^{k} -1)/4 = 1 +1\cdot5  +\cdots +1\cdot5^{k-1}$
and $0 \leq k_1 \leq k_2 \leq k_3 \leq n$, we have $M_2 \subseteq M_1$.
It remains to show that $M_1 \subseteq M_2$, i.e., $m \in M_2$ for any $m \in M_1$.
The remainder of our proof is divided into two cases.

Case 1: assume $m \in M_1$ such that $m_0 = \cdots = m_{n-1} = a$,
where $0 \leq a \leq 3$. If $a=2$, then
$m =\tfrac{5^{k_1} -1}{4} + \tfrac{5^{k_2} -1}{4} + \tfrac{5^{k_3} -1}{4}$,
where $k_1 = 0$ and $k_2 = k_3 =n$.
Hence $m \in M_2$. Similarly, $m \in M_2$ for $a=0$, $1$ or $3$.

Case 2: assume $m \in M_1$ such that $m_0$, $m_1$, $\ldots$, $m_{n-1}$ are not all equal.
Then the number $N$ of the sign $>$ in the inequality
$
3 \geq m_0 \geq m_1 \geq \cdots \geq m_{n-1} \geq 0
$
is 1, 2 or 3. If $N=3$, then there exist $a$, $b$, $c$ such that
\[\begin{split}
0 &\leq a < b < c \leq n-2, \\
m_{0}   & = m_{ 1}  = \cdots = m_{a} = 3,   \\
m_{a+1} & = m_{a+2} = \cdots = m_{b} = 2,   \\
m_{b+1} & = m_{b+2} = \cdots = m_{c} = 1,   \\
m_{c+1} & = m_{c+2} = \cdots = m_{n-1} = 0. \\
\end{split}\]
Then $m =\tfrac{5^{k_1} -1}{4} + \tfrac{5^{k_2} -1}{4} + \tfrac{5^{k_3} -1}{4}$,
where $k_1 = a+1$, $k_2 = b+1$ and $k_3 = c+1$.
Hence $m \in M_2$. Similarly, $m \in M_2$ for $N= 1$ or $2$.
\end{proof}

Two criteria that $\binom{5m+3}{m} \not\equiv 0 \pmod{5}$ are given in the theorem above.
The following theorem finds the explicit values of this class of binomial coefficients.

\begin{thm}\label{Aeq13}
Let $m = \frac{5^{k_1} -1}{4} + \frac{5^{k_2} -1}{4} + \frac{5^{k_3} -1}{4}$
with $0 \leq k_1 \leq k_2 \leq k_3 \leq n$. Then in $\F_5$,
\[
\binom{5m+3}{m}=
\begin{cases}
  1 & \text{if $k_1 = k_2 = k_3$ or $k_1 < k_2 < k_3$,}\\
  3 & \text{if $k_1 = k_2 < k_3$ or $k_1 < k_2 = k_3$.}
\end{cases}
\]
\end{thm}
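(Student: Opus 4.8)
The plan is to evaluate $\binom{5m+3}{m}$ directly through Lucas' theorem, reading off the base-$5$ digits of both arguments and watching the resulting product telescope. First I would record the $5$-adic digit pattern of $m$ produced by the given representation. Since $(5^{k}-1)/4 = 1 + 1\cdot 5 + \cdots + 1\cdot 5^{k-1}$, adding the three terms $\tfrac{5^{k_1}-1}{4},\tfrac{5^{k_2}-1}{4},\tfrac{5^{k_3}-1}{4}$ deposits a $1$ in digit positions $[0,k_1)$, $[0,k_2)$ and $[0,k_3)$ respectively; because $0\le k_1\le k_2\le k_3$ and every resulting digit sum is at most $3<5$, no carries occur, so $m_i=3$ for $0\le i<k_1$, $m_i=2$ for $k_1\le i<k_2$, $m_i=1$ for $k_2\le i<k_3$, and $m_i=0$ for $k_3\le i\le n-1$. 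In particular the digit string of $m$ is non-increasing, consistent with Theorem~\ref{Aneq0}.

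Next I would apply Lucas' theorem using the two expansions recorded in \eqref{5m+3}. Reading digit by digit, the digits of $5m+3$ at positions $0,1,\dots,n$ are $3,m_0,m_1,\dots,m_{n-1}$, while those of $m$ are $m_0,m_1,\dots,m_{n-1},0$. Hence
\[
\binom{5m+3}{m}\equiv \binom{3}{m_0}\prod_{i=1}^{n-1}\binom{m_{i-1}}{m_i}\binom{m_{n-1}}{0}
=\binom{3}{m_0}\prod_{i=1}^{n-1}\binom{m_{i-1}}{m_i}\pmod{5},
\]
since $\binom{m_{n-1}}{0}=1$. Setting $t_0=3$ and $t_i=m_{i-1}$ for $1\le i\le n$, this is the telescoping product $\prod_{i=1}^{n}\binom{t_{i-1}}{t_i}$ over the non-increasing sequence $t_0\ge t_1\ge\cdots\ge t_n$. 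Inside any run of a constant value each factor is $\binom{v}{v}=1$, so the whole product collapses to the product of the ``drop'' factors $\binom{a}{b}$ taken over the consecutive distinct values $a>b$ that actually appear.

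Finally I would evaluate these drop factors in the four admissible equality patterns of $(k_1,k_2,k_3)$. The extended sequence consists of a block of $3$'s (of length $k_1+1$, always present), then blocks of $2$'s, $1$'s and $0$'s of lengths $k_2-k_1$, $k_3-k_2$ and $n-k_3$. Any descent into the final $0$-block contributes $\binom{v}{0}=1$, so only the interior drops among $\{3,2,1\}$ matter. When $k_1<k_2<k_3$ the interior drops are $3\to 2\to 1$, giving $\binom{3}{2}\binom{2}{1}=6\equiv 1$; when $k_1=k_2=k_3$ there is no interior drop, giving $1$; when $k_1=k_2<k_3$ the $2$-block is empty and the single interior drop is $3\to 1$, giving $\binom{3}{1}=3$; and when $k_1<k_2=k_3$ the $1$-block is empty and the single interior drop is $3\to 2$, giving $\binom{3}{2}=3$. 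These match the stated values. The hard part will be the bookkeeping at the boundaries: correctly identifying which blocks are empty (so that, e.g., the $3\to 1$ transition stands in for the missing $3\to 2\to 1$) and disposing of the degenerate endpoints $k_1=0$ (where the leading $3$ of $5m+3$ alone forms the $3$-block) and $k_3=n$ (where the $0$-block disappears). Each of these reduces to confirming that every omitted factor equals $1$.
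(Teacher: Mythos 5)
Your proposal is correct and follows essentially the same route as the paper: read off the $5$-adic digits of $m$ (a non-increasing string of $3$'s, $2$'s, $1$'s, $0$'s determined by $k_1,k_2,k_3$) and of $5m+3$, then apply Lucas' theorem and multiply the digit-wise binomial coefficients, exactly as in the paper's four-case analysis. Your telescoping-product packaging merely absorbs the paper's subcases ($k_1=0$ versus $k_1\ge 1$, and the empty blocks) into the uniform observation that constant runs and drops to $0$ contribute factors of $1$, which is a tidier bookkeeping of the identical computation.
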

\begin{proof}
For ease of notations, we denote $A = \binom{5m+3}{m}$.

Case 1: $k_1 = k_2 = k_3 =k$ with $0 \leq k \leq n$. If $k =0$ then $m = 0$
and $A = \binom{3}{0} = 1$. If $1 \leq k \leq n$, then $m = 3(5^k -1)/4$, i.e.,
$m_0 =\cdots =m_{k-1} =3$ and $m_k =\cdots =m_{n-1} =0$.
By Lucas' theorem and~\eqref{5m+3}, $A \equiv \binom{3}{3}\binom{3}{0} = 1 \pmod{5}$.

Case 2: $k_1 = k_2 < k_3$. If $k_1 = k_2 =0$, then
\[
  m_0     =\cdots =m_{k_3 -1} =1,
~~m_{k_3} =\cdots =m_{n   -1}    =0.
\]
Thus $A \equiv \binom{3}{1}\binom{1}{0} = 3 \pmod{5}$.
If $k_1 = k_2 =k \geq 1$, then
$m_0     =\cdots =m_{k   -1} =3$,
$ m_k     =\cdots =m_{k_3 -1} =1$,
$ m_{k_3} =\cdots =m_{n   -1} =0$.
Hence $A \equiv \binom{3}{3}\binom{3}{1}\binom{1}{0} = 3 \pmod{5}$.

Case 3: $k_1 < k_2 = k_3$.
The proof is similar to that of Case~2 and so is omitted.

Case 4: $k_1 < k_2 < k_3$. If $k_1 =0$ then
$m_0     =\cdots =m_{k_2 -1} =2$,
$m_{k_2} =\cdots =m_{k_3 -1} =1$,
$m_{k_3} =\cdots =m_{n   -1} =0$.
Hence $A \equiv \binom{3}{2}\binom{2}{1}\binom{1}{0} \equiv 1 \pmod{5}$.
If $0< k_1 < k_2 < k_3$, then
\[\begin{split}
m_0   &  =\cdots =m_{k_1 -1} =3,
~~m_{k_1} =\cdots =m_{k_2 -1} =2, \\
m_{k_2}& =\cdots =m_{k_3 -1} =1,
~~m_{k_3} =\cdots =m_{n   -1} =0.
\end{split}\]
Hence $A \equiv \binom{3}{3}\binom{3}{2}\binom{2}{1}\binom{1}{0} \equiv 1 \pmod{5}$.
\end{proof}

\begin{cor}\label{em0=13}
Let $T =(5^{n-1}-1)/2$, $n \geq 1$ and $2T < m \leq 4T +1$. Then in $\F_5$,
\[
\binom{5m+3}{m} =
 \begin{cases}
  3^{\binom{k_1}{k_2}}  & \text{if $m = (5^{n} +5^{k_1} +5^{k_2} -3)/4$,} \\
  ~~0  & \text{otherwise,}
 \end{cases}
\]
where $0 \leq k_1 \leq k_2 \leq n-1$.
\end{cor}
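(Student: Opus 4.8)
The plan is to derive Corollary~\ref{em0=13} directly from Theorems~\ref{Aneq0} and~\ref{Aeq13} by a range analysis, determining exactly which admissible triples $(k_1,k_2,k_3)$ produce an $m$ in the interval $2T < m \leq 4T+1$. The two cited theorems already give (a) a complete description of when $\binom{5m+3}{m}\not\equiv 0 \pmod 5$ and (b) the precise value in each nonzero case; so the only real work is to intersect that description with the stated range.

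First I would rewrite the bounds in terms of $5^{n-1}$. Since $T=(5^{n-1}-1)/2$, the condition $2T<m\leq 4T+1$ becomes $5^{n-1}-1<m\leq 2\cdot 5^{n-1}-1$. By Theorem~\ref{Aneq0}, $\binom{5m+3}{m}\not\equiv 0\pmod 5$ precisely when $m=(5^{k_1}+5^{k_2}+5^{k_3}-3)/4$ for some $0\leq k_1\leq k_2\leq k_3\leq n$; moreover the sorted triple is uniquely recoverable from the $5$-adic digits of $m$, since $m_i=\#\{j:k_j>i\}$ and no carrying occurs (each $m_i\leq 3<5$). Thus the question reduces to: for which such triples does $m$ lie in the interval, and what value does Theorem~\ref{Aeq13} then assign?

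Next I would pin down the admissible triples. Multiplying the interval bounds by $4$, membership is equivalent to $4\cdot 5^{n-1}-1<5^{k_1}+5^{k_2}+5^{k_3}\leq 8\cdot 5^{n-1}-1$. If $k_3\leq n-1$, then $5^{k_1}+5^{k_2}+5^{k_3}\leq 3\cdot 5^{n-1}<4\cdot 5^{n-1}$, violating the lower bound; hence $k_3=n$ is forced. Substituting $5^{k_3}=5\cdot 5^{n-1}$, the upper bound becomes $5^{k_1}+5^{k_2}\leq 3\cdot 5^{n-1}-1$, which excludes $k_2=n$ and therefore forces $k_1\leq k_2\leq n-1$; conversely every such pair satisfies both bounds. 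So the admissible triples are exactly those with $k_3=n$ and $0\leq k_1\leq k_2\leq n-1$, giving $m=(5^n+5^{k_1}+5^{k_2}-3)/4$.

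Finally I would read off the value. Since $k_3=n>k_2$, only the cases $k_1=k_2<k_3$ and $k_1<k_2<k_3$ of Theorem~\ref{Aeq13} can occur, yielding $\binom{5m+3}{m}=3$ when $k_1=k_2$ and $\binom{5m+3}{m}=1$ when $k_1<k_2$. Encoding this as $3^{\binom{k_1}{k_2}}$, which equals $3$ for $k_1=k_2$ and $1$ for $k_1<k_2$, completes the first branch; any $m$ in the interval not of this form has $\binom{5m+3}{m}\equiv 0$ by the contrapositive of Theorem~\ref{Aneq0}, giving the ``otherwise'' branch. I expect the main obstacle to be the inequality bookkeeping in the range analysis, namely carefully excluding $k_3\leq n-1$ and $k_2=n$, since everything else follows mechanically from the two cited theorems.
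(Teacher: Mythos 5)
Your proposal is correct and follows essentially the same route as the paper's proof: invoke Theorem~\ref{Aneq0} to restrict to $m=\frac{5^{k_1}-1}{4}+\frac{5^{k_2}-1}{4}+\frac{5^{k_3}-1}{4}$, show the range $2T<m\leq 4T+1$ forces $k_3=n$ and $k_1\leq k_2\leq n-1$, and then read off the value from Theorem~\ref{Aeq13}. The only difference is cosmetic: you do the range analysis by clearing denominators and comparing powers of $5$, whereas the paper compares $5$-adic digit expansions of $2T+1$ and $4T+1$; your added remark that the triple $(k_1,k_2,k_3)$ is uniquely determined by $m$ (no carries) is a harmless and slightly more careful touch.
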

\begin{proof}
Let $A = \binom{5m+3}{m}$. According to Theorem~\ref{Aneq0}, if $0\leq m \leq 5^{n}-1$
then $A \not\equiv 0 \pmod{5}$ if and only if
\[
m = \tfrac{5^{k_1} -1}{4} + \tfrac{5^{k_2} -1}{4} + \tfrac{5^{k_3} -1}{4},
\]
where $0 \leq k_1 \leq k_2 \leq k_3 \leq n$. Since $2T+1 = 5^{n-1}$ and
\[
4T +1 = 4 +4\cdot5  +\cdots  +4\cdot5^{n-2} +1\cdot5^{n-1},
\]
we have, for $2T < m \leq 4T +1$, $A \not\equiv 0 \pmod{5}$ if and only if
$m = (5^{k_1} +5^{k_2} +5^{n}-3)/4$, where $0 \leq k_1 \leq k_2 < n$.
In this case, $A \equiv  3^{\binom{k_1}{k_2}}\pmod{5}$ by Theorem~\ref{Aeq13}.
\end{proof}

The following corollary presents a congruence relation for the binomial coefficients in~\eqref{e-m0} and \eqref{e-m2}.

\begin{cor}\label{em02=0}
Let $T =(5^{n-1}-1)/2$, $n \geq 2$ and $T < m \leq 2T$. Then
\[
\binom{5m+3}{m}\equiv \binom{5m+2}{m -1} \pmod{5}.
\]
\end{cor}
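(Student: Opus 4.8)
The plan is to avoid comparing the two binomial coefficients directly and instead collapse the problem to a single coefficient using Pascal's rule. Applying $\binom{N}{k}=\binom{N-1}{k}+\binom{N-1}{k-1}$ with $N=5m+3$ and $k=m$ gives
\[
\binom{5m+3}{m}=\binom{5m+2}{m}+\binom{5m+2}{m-1}.
\]
Consequently the asserted congruence $\binom{5m+3}{m}\equiv\binom{5m+2}{m-1}\pmod 5$ is \emph{equivalent} to the single statement $\binom{5m+2}{m}\equiv 0\pmod 5$. This is the decisive simplification: it eliminates $\binom{5m+2}{m-1}$ entirely, whose $5$-adic expansion would otherwise force one to track the borrows occurring in $m-1$, and leaves only one coefficient to evaluate. (Note $m>T\ge 2$ for $n\ge 2$, so $m-1\ge 0$ and Pascal's rule applies.)

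Next I would evaluate $\binom{5m+2}{m}$ modulo $5$ by Lucas' theorem. Since $T<m\le 2T=5^{n-1}-1$, we have $m<5^{n-1}$, so writing $m=m_0+m_1 5+\cdots+m_{n-2}5^{n-2}$ with $m_{n-1}=0$ gives $5m+2=2+m_0 5+\cdots+m_{n-2}5^{n-1}$ with no carries (each $m_i\le 4$). Lucas' theorem then yields
\[
\binom{5m+2}{m}\equiv\binom{2}{m_0}\prod_{i=1}^{n-1}\binom{m_{i-1}}{m_i}\pmod 5,
\]
and this product is nonzero precisely when $2\ge m_0\ge m_1\ge\cdots\ge m_{n-1}=0$, exactly as in the criterion of Theorem~\ref{Aneq0} but with the leading digit bounded by $2$ rather than $3$.

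Finally I would use the hypothesis $m>T$ to rule out this nonvanishing case. A descending digit string with $m_0\le 2$ forces every digit $m_i\le 2$, hence
\[
m\le\sum_{i=0}^{n-2}2\cdot 5^i=2\cdot\frac{5^{n-1}-1}{4}=\frac{5^{n-1}-1}{2}=T.
\]
Since $m>T$ by assumption, the nonvanishing condition cannot hold, so the product is $0$ and $\binom{5m+2}{m}\equiv 0\pmod 5$; combined with the Pascal identity this is the claim. The only real obstacle is recognizing the Pascal reduction at the outset: once $\binom{5m+2}{m}$ is isolated, the remainder is a one-line Lucas computation together with the elementary digit bound $m\le T$, whereas attacking both original coefficients simultaneously entangles the argument in the borrow structure of $m-1$.
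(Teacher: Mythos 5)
Your proof is correct, and it takes a genuinely different route from the paper's. The paper starts from the ratio identity $m\binom{5m+3}{m}=(5m+3)\binom{5m+2}{m-1}$, which gives $\binom{5m+2}{m-1}\equiv 2m\binom{5m+3}{m}\pmod 5$; it then has to split into cases, invoking Theorem~\ref{Aneq0} to pin down exactly which $m$ in $(T,2T]$ make $\binom{5m+3}{m}$ nonzero (namely $m=\frac{5^{k_1}-1}{4}+T$), and computing $m\equiv 3\pmod 5$ there so that the factor $2m$ becomes $1$. You instead use Pascal's rule to reduce the whole congruence to the single statement $\binom{5m+2}{m}\equiv 0\pmod 5$, which you verify by a direct Lucas computation: nonvanishing would force $2\ge m_0\ge\cdots\ge m_{n-2}\ge 0$, hence $m\le 2\cdot\frac{5^{n-1}-1}{4}=T$, contradicting $m>T$. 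Your argument is self-contained (it does not lean on Theorem~\ref{Aneq0}), avoids any case split on whether $\binom{5m+3}{m}$ vanishes, and sidesteps the borrow bookkeeping in the $5$-adic expansion of $m-1$; the paper's version, on the other hand, reuses machinery it has already built and fits the pattern of its other corollaries (e.g.\ Corollary~\ref{em023=0}), where such ratio identities are chained together. Both proofs are valid; yours is arguably the more elementary and economical for this particular statement.
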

\begin{proof}
Denote by $A$ and $B$ the above binomial coefficients, respectively.
Then $mA =(5m+3)B$, and so $2mA \equiv B \pmod{5}$.
If $A \equiv 0 \pmod{5}$, then $B \equiv 0 \pmod{5}$, and thus $A \equiv B \pmod{5}$.
We next show $A \equiv B \pmod{5}$ for $A \not\equiv 0 \pmod{5}$.
By Theorem~\ref{Aneq0}, for $0\leq m \leq 5^{n}-1$, $A \not\equiv 0 \pmod{5}$ if and only if
$
m = \tfrac{5^{k_1} -1}{4} + \tfrac{5^{k_2} -1}{4} + \tfrac{5^{k_3} -1}{4}
$,
where $0 \leq k_1 \leq k_2 \leq k_3 \leq n$. Since
\[\begin{split}
 T  & = 2 +2\cdot5  +\cdots  +2\cdot5^{n-2}, \\
2T  & = 4 +4\cdot5  +\cdots  +4\cdot5^{n-2},
\end{split}\]
we obtain, for $T <  m \leq 2T$, $A \not\equiv 0 \pmod{5}$ if and only if
$m = \frac{5^{k_{1}} -1}{4} +T$, where $1 \leq k_{1} \leq n-1$.
Hence $m \equiv 3 \pmod{5}$, and so $A \equiv 2mA \equiv B \pmod{5}$.
\end{proof}

Next we study the last binomial coefficient in~\eqref{e-m1}.
\begin{thm}\label{bico-id-1}
Let $0 \leq m \leq 5^{n}-1$ with $n \geq 1$.  Write
\[
m = m_0 +m_1 5 +\cdots +m_{n-1} 5^{n-1}, ~~0 \le m_i \le 4.
\]
Then the following three statements are equivalent:
\begin{enumerate}[$(i)$]
  \item {\Large$\binom{5m+3 +\frac{5^{n}-1}{2}}{m +\frac{5^{n}-1}{2}}$} $\not\equiv 0 \pmod{5}$;
  \item $3 = m_0 \geq m_1 \geq \cdots \geq m_{n-1} \geq 2$;
  \item $m = \frac{5^{n}-1}{2} + \frac{5^{k} -1}{4}$, where $1 \leq k \leq n$.
\end{enumerate}
\end{thm}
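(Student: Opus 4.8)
The plan is to follow the template of Theorem~\ref{Aneq0}: prove (i)$\Leftrightarrow$(ii) by computing the base-$5$ expansions of the numerator $N:=5m+3+\frac{5^{n}-1}{2}$ and the denominator $K:=m+\frac{5^{n}-1}{2}$ and then invoking Lucas' theorem, and prove (ii)$\Leftrightarrow$(iii) by the same threshold/counting bijection used there to show $M_1=M_2$. The genuinely new feature, and what I expect to be the main obstacle, is that $\frac{5^{n}-1}{2}=2+2\cdot5+\cdots+2\cdot5^{n-1}$ has every base-$5$ digit equal to $2$, so forming the expansions of $N$ and $K$ now involves real carries, unlike the carry-free situation of~\eqref{5m+3}.

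To get a grip on the carries I would first record the simplification
\[
N=5m+3+\tfrac{5^{n}-1}{2}=5\Big(m+\tfrac{5^{n-1}+1}{2}\Big),
\]
which shows immediately that the $0$th base-$5$ digit of $N$ is $0$. By Lucas' theorem, condition~(i) then forces the $0$th digit of $K$ to be $0$ as well; since that digit is $(m_0+2)\bmod 5$, this is possible only when $m_0=3$, and it produces a carry $1$ into position~$1$. Thus the first requirement of~(ii), namely $m_0=3$, is pinned down at once.

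Writing $s=\frac{5^{n}-1}{2}$, the key observation is that the carries in forming $K=m+s$ and in forming $N=5m+3+s$ are driven by the \emph{same} recursion. Indeed, because of the factor $5m$, the digit $N_i$ for $1\le i\le n-1$ is built from $m_{i-1}+2$ together with an incoming carry, exactly as $K_{i-1}$ is built from $m_{i-1}+2$; both carry sequences obey ``carry out $=1$ iff $m_j+2+(\text{carry in})\ge5$'' and differ only in their initial carry ($0$ for $K$, $1$ for $N$). Once $m_0=3$ is forced, the carry entering position~$1$ equals $1$ in both, so the two sequences coincide from then on, giving $N_1=1$ and $N_i=K_{i-1}$ for $2\le i\le n-1$. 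Imposing the Lucas inequalities $N_i\ge K_i$ at every position then forces every carry to equal $1$ — whence $K_i=m_i-2$ with $m_i\ge2$ for $1\le i\le n-1$ and $K_n=1$ — and forces $K_1\ge\cdots\ge K_{n-1}$; together these say precisely $3=m_0\ge m_1\ge\cdots\ge m_{n-1}\ge2$. Conversely, under~(ii) all the digits are explicit and each factor $\binom{N_i}{K_i}$ is a nonzero binomial coefficient modulo~$5$, so the Lucas product is nonzero. This yields (i)$\Leftrightarrow$(ii); the case $n=1$ is checked directly, since there the top digit behaves slightly differently.

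Finally, for (ii)$\Leftrightarrow$(iii) I would observe that a non-increasing $\{2,3\}$-valued digit string with $m_0=3$ is determined by the single index $k\in\{1,\dots,n\}$ at which it drops from $3$ to $2$; summing the corresponding geometric pieces gives
\[
m=3\cdot\tfrac{5^{k}-1}{4}+2\cdot\tfrac{5^{n}-5^{k}}{4}=\tfrac{5^{n}-1}{2}+\tfrac{5^{k}-1}{4},
\]
and this correspondence is a bijection onto the set described in~(iii). I expect the carry bookkeeping of the previous paragraph to be the only real difficulty; the reduction $N=5\big(m+\frac{5^{n-1}+1}{2}\big)$ together with the synchronization of the two carry sequences after $m_0=3$ is forced is exactly what makes it manageable.
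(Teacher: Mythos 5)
Your proposal is correct and follows essentially the same route as the paper: both compute the base-$5$ expansions of $5m+3+\frac{5^{n}-1}{2}$ and $m+\frac{5^{n}-1}{2}$, force $m_0=3$ from the vanishing constant digit, track the resulting carries position by position via Lucas' theorem to arrive at $3=m_0\ge m_1\ge\cdots\ge m_{n-1}\ge 2$, and then enumerate the non-increasing $\{2,3\}$-valued digit strings to obtain (iii). Your observation that the two carry sequences satisfy the same recursion (shifted by one position) is simply a cleaner packaging of the paper's step-by-step carry propagation.
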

\begin{proof}
Denote $\alpha = 5m+3 +\tfrac{5^{n}-1}{2}$ and $\beta = m +\tfrac{5^{n}-1}{2}$.
Then their $5$-adic expansions are as follows:
\begin{equation}\label{alpha}
\begin{split}
\alpha = & ~0+(m_0+3)5 +(m_1+2)5^2 +(m_2+2)5^3+\cdots\\
&+(m_{n-2}+2)5^{n-1} +m_{n-1}5^{n},
\end{split}
\end{equation}
\begin{equation}\label{beta}
\begin{split}
\beta = & ~(m_0+2)+(m_1+2)5 +(m_2+2)5^2 +\cdots \\
& +(m_{n-1}+2)5^{n-1} +0\cdot 5^{n}.
\end{split}
\end{equation}
If $\binom{\alpha}{\beta} \not\equiv 0 \pmod{5}$, then by Lucas' theorem,
$0 \succeq m_0 +2$, i.e., $m_0 =3$,
where $a_i \succeq b_i$ denotes $a_i \geq b_i$ in $\F_5$.
The condition $m_0 =3$ leads to a carry 1 in~\eqref{alpha} and~\eqref{beta}, respectively. Then
\[
\begin{split}
\alpha = & ~0+1\cdot 5 +(m_1+3)5^2 +(m_2+2)5^3 +\cdots, \\
\beta  = & ~0+(m_1+3)5 +(m_2+2)5^2 +(m_3+2)5^3 +\cdots. \\
\end{split}
\]
If $\binom{\alpha}{\beta} \not\equiv 0 \pmod{5}$, then $1 \succeq m_1+3$,
i.e., $m_{1}=2,3$, which also yields a carry 1 in the expansions above. Now
\[
\begin{split}
\alpha = & ~0+1\cdot 5 +(m_1-2)5^2 +(m_2+3)5^3 +\cdots, \\
\beta  = & ~0+(m_1-2)5 +(m_2+3)5^2 +(m_3+2)5^3 +\cdots. \\
\end{split}
\]
If $\binom{\alpha}{\beta} \not\equiv 0 \pmod{5}$, then $1 \succeq m_1-2 \succeq m_2+3$,
and so $1 \succeq m_2+3$, i.e., $m_{2}=2,3$, which also yields a carry 1.
And so on, we obtain $\binom{\alpha}{\beta} \not\equiv 0 \pmod{5}$ if and only if
\begin{equation}\label{b1-3-2}
m_0 = 3, ~1 \succeq m_1-2 \succeq \cdots \succeq m_{n-1}-2,  ~m_{n-1}+1 \succeq 1.
\end{equation}
So $m_k = 2$, $3$ for $1 \leq k \leq n-1$. There are exactly two cases:
\begin{itemize}
  \item $m_{1} = \cdots = m_{n-1}=3$, i.e., $m = \frac{5^{n}-1}{2} + \frac{5^{n} -1}{4}$;
  \item $m_{1} = \cdots = m_{k-1}=3$ and $m_{k} = \cdots = m_{n-1}=2$ for some $1 \leq k \leq n-1$.
   That is, $m = \frac{5^{n}-1}{2} + \frac{5^{k} -1}{4}$, where $1 \leq k \leq n-1$.
\end{itemize}
Therefore, \eqref{b1-3-2} is equivalent to $(ii)$ or $(iii)$.
\end{proof}

\begin{cor}\label{em1=0}
Let $0 \leq m \leq (5^{n}-1)/2$ with $n \geq 1$. Then
\[
\binom{5m+3 +\frac{5^{n}-1}{2}}{m +\frac{5^{n}-1}{2}} \equiv 0 \pmod{5}.
\]
\end{cor}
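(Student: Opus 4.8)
The plan is to deduce this corollary directly from Theorem~\ref{bico-id-1}, which already pins down exactly when the relevant binomial coefficient fails to vanish modulo~$5$. The only work is to observe that the nonvanishing range and the hypothesis $0 \leq m \leq (5^n-1)/2$ are disjoint, so the binomial coefficient must always be $0$ in $\F_5$ on our range.

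Concretely, I would first note that the hypothesis $0 \leq m \leq (5^n-1)/2$ places $m$ in the interval $0 \leq m \leq 5^n-1$ to which Theorem~\ref{bico-id-1} applies. That theorem asserts the equivalence of statement $(i)$, namely
\[
\binom{5m+3 +\frac{5^{n}-1}{2}}{m +\frac{5^{n}-1}{2}} \not\equiv 0 \pmod{5},
\]
with statement $(iii)$, namely that $m = \frac{5^{n}-1}{2} + \frac{5^{k} -1}{4}$ for some $1 \leq k \leq n$. So the binomial coefficient is nonzero in $\F_5$ precisely for these special values of $m$.

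The key step is then the simple size comparison: for every admissible $k$ we have $k \geq 1$, hence $\frac{5^{k}-1}{4} \geq \frac{5-1}{4} = 1 > 0$, and therefore any $m$ of the form in $(iii)$ satisfies
\[
m = \frac{5^{n}-1}{2} + \frac{5^{k}-1}{4} \geq \frac{5^{n}-1}{2} + 1 > \frac{5^{n}-1}{2}.
\]
This strictly exceeds the upper bound $(5^n-1)/2$ imposed in the hypothesis. Consequently no $m$ with $0 \leq m \leq (5^n-1)/2$ can satisfy $(iii)$; by the contrapositive of the equivalence $(i)\Leftrightarrow(iii)$, statement $(i)$ fails for all such $m$, which is exactly the claimed congruence. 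I do not anticipate any genuine obstacle here, since the argument is purely a boundary check; the only point worth stating carefully is that the smallest value produced by $(iii)$, attained at $k=1$, already overshoots the range, so the conclusion also holds in the base case $n=1$ (where the range is $0 \le m \le 2$ while $(iii)$ first forces $m=3$).
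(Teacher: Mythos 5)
Your proposal is correct and is exactly the argument the paper implicitly relies on: Corollary~\ref{em1=0} is stated without proof as an immediate consequence of Theorem~\ref{bico-id-1}, and your boundary check that every $m$ satisfying condition $(iii)$ is strictly larger than $(5^n-1)/2$ is the intended (and complete) justification.
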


Now we consider the last binomial coefficient in~\eqref{e-m3}.
\begin{thm}\label{Cneq0}
Let $T =(5^{n-1}-1)/2$, $n \geq 1$ and $0 \leq m \leq 2T$.  Write
$m = m_0 +m_1 5  +\cdots +m_{n-2} 5^{n-2}$, where $0 \le m_i \le 4$.
Then the following three statements are equivalent:
\begin{enumerate}[$(i)$]
  \item {\Large$\binom{5m+2+\frac{5^{n}-1}{2}}{m-1 +\frac{5^{n}-1}{2}}$} $\not\equiv 0 \pmod{5}$;
  \item $2 \geq m_0 \geq m_1 \geq \cdots \geq m_{n-2} \geq 0$;
  \item $m = \frac{5^{k_1} -1}{4} + \frac{5^{k_2} -1}{4}$,
        where $0 \leq k_1 \leq k_2 \leq n-1$.
\end{enumerate}
\end{thm}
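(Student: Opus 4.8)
The plan is to follow the same template as Theorems~\ref{Aneq0} and~\ref{bico-id-1}: write out the $5$-adic expansions of the top and bottom of the binomial coefficient, apply Lucas' theorem digit by digit to characterize when it is nonzero (this yields $(ii)$), and then identify the resulting set of admissible $m$ with $(iii)$ by an elementary bijection argument. Set $\alpha = 5m+2+\tfrac{5^n-1}{2}$ and $\beta = m-1+\tfrac{5^n-1}{2}$, and recall $\tfrac{5^n-1}{2} = 2+2\cdot5+\cdots+2\cdot5^{n-1}$, so adding it raises every digit by $2$. Since $m \le 2T = 5^{n-1}-1$, the digits $m_0,\dots,m_{n-2}$ sit in positions $0$ through $n-2$, and $5m$ moves them up to positions $1$ through $n-1$.

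First I would treat the ``no-carry'' regime, in which all $m_i \le 2$ (taking $n\ge 2$; the case $n=1$ forces $m=0$ and is immediate). Here $m_i+2 \le 4$, so no carries arise in forming either $\alpha$ or $S:=m+\tfrac{5^n-1}{2}$, and since the lowest digit of $S$ is $m_0+2\ge 2$ the subtraction of $1$ producing $\beta$ causes no borrow. One reads off
\[
\alpha = 4 + \sum_{i=1}^{n-1}(m_{i-1}+2)5^{i}, \qquad
\beta = (m_0+1) + \sum_{i=1}^{n-2}(m_i+2)5^{i} + 2\cdot 5^{n-1}.
\]
Lucas' theorem then gives $\binom{\alpha}{\beta}\not\equiv 0$ iff $4\ge m_0+1$ (automatic since $m_0\le2$), $m_{i-1}+2 \ge m_i+2$ for $1\le i\le n-2$, and $m_{n-2}+2\ge 2$ (automatic); that is, iff $m_0\ge m_1\ge\cdots\ge m_{n-2}$. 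Combined with the standing bound $m_i\le 2$, this is exactly $(ii)$. In particular a non-monotone string with small digits already produces a vanishing factor $\binom{m_{i-1}+2}{m_i+2}=0$, so it is handled here.

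The main work is to show that leaving this regime always gives $\binom{\alpha}{\beta}\equiv0$, so that $(i)$ forces all $m_i\le 2$. The point is that a digit $m_j\ge 3$ makes $m_j+2\ge 5$ generate a carry when $\tfrac{5^n-1}{2}$ is added, and this carry enters $\alpha$ (where $m_j$ lies at position $j+1$) and $\beta$ (where it lies at position $j$) out of step; tracking the carries and the single borrow bottom-up, exactly as in the proof of Theorem~\ref{bico-id-1}, shows the carry depresses some digit $\alpha_t$ below $\beta_t$, whence $\binom{\alpha_t}{\beta_t}=0$. This carry/borrow bookkeeping is the only delicate step, and it is what pins down the bound $m_i\le 2$ (the analogue of the forced values $m_i\in\{2,3\}$ in Theorem~\ref{bico-id-1}). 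Together with the previous paragraph this establishes $(i)\Leftrightarrow(ii)$.

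Finally, $(ii)\Leftrightarrow(iii)$ is a purely combinatorial identity of the same flavour as the $M_1=M_2$ argument in Theorem~\ref{Aneq0}, but with two summands instead of three. Writing $\tfrac{5^{k}-1}{4}=1+5+\cdots+5^{k-1}$, a sum $\tfrac{5^{k_1}-1}{4}+\tfrac{5^{k_2}-1}{4}$ with $0\le k_1\le k_2\le n-1$ has digit string consisting of $k_1$ twos, then $(k_2-k_1)$ ones, then zeros, which is precisely a nonincreasing string bounded by $2$; conversely every such string arises uniquely this way, taking $k_1$ to be the number of $2$'s and $k_2$ the number of nonzero digits. Hence the two descriptions of $m$ coincide, completing the proof.
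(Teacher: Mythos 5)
Your proposal is correct and follows essentially the same route as the paper's proof: the same $5$-adic expansions of $\alpha$ and $\beta$, the same case split between the no-carry regime (all $m_i\le 2$, where Lucas' theorem is applied digit by digit) and the carry regime (some $m_i\ge 3$, handled by the carry-tracking argument of Theorem~\ref{bico-id-1}), and the same digit-string identification for $(ii)\Leftrightarrow(iii)$. The only difference is cosmetic: you make the two-summand bijection for $(ii)\Leftrightarrow(iii)$ explicit where the paper simply defers to the argument of Theorem~\ref{Aneq0}.
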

\begin{proof}
Denote $\alpha = 5m+2 +\tfrac{5^{n}-1}{2}$ and $\beta = m -1+\tfrac{5^{n}-1}{2}$.
Then their $5$-adic expansions are as follows:
\[\begin{split}
& \alpha = 4 +(m_0+2)5 +\cdots +(m_{n-2}+2)5^{n-1}, \\
& \beta = (m_0+1) +(m_1+2)5  +\cdots +2\cdot5^{n-1}.
\end{split}\]
The proof that $(i)$ is equivalent to $(ii)$ is divided in two cases.

Case 1: $0 \leq m_i \leq 2$ for all $i$. Then by Lucas' Theorem,
$\binom{\alpha}{\beta} \not\equiv 0 \pmod{5}$ if and only if
\[\begin{split}
4 \geq m_0&+1,\,\, m_0 +2 \geq \cdots \geq m_{n-2}+2 \geq 2, \text{~~i.e.},  \\
& 2 \geq m_0 \geq \cdots \geq m_{n-2} \geq 0.
\end{split}\]

Case 2: $m_i =3$ or~$4$ for some $i$ ($0 \leq i \leq n-2$).
We first show $\binom{\alpha}{\beta} \equiv 0 \pmod{5}$ if $m_0 =3$.
An argument similar to the one used in Theorem~\ref{bico-id-1} shows that
$\binom{\alpha}{\beta} \not\equiv 0 \pmod{5}$ if and only if
$m_1=3$ and $1 \geq m_2 -2 \geq \cdots \geq  m_{n-2} -2 \geq 3$.
This is contrary to $3 > 1$.  Similarly, we have
$\binom{\alpha}{\beta} \equiv 0 \pmod{5}$ when $m_0 =4$, $m_i =3$ or $4$ for $1 \leq i \leq n-2$.

An argument similar to the one used in Theorem~\ref{Aneq0}
can show that $(ii)$ is equivalent to~$(iii)$.
\end{proof}

The following corollaries give the linear congruence relations
between the binomial coefficients in~\eqref{e-m0}, \eqref{e-m2} and \eqref{e-m3}.

\begin{cor}\label{Ceq-D}
Let $T =(5^{n-1}-1)/2$, $n \geq 1$ and $0 \leq m \leq 2T$. Then
\[
  \binom{5m+2+\frac{5^{n}-1}{2}}{m-1 +\frac{5^{n}-1}{2}}
  \equiv -\binom{5m+2}{m} \pmod{5}.
\]
\end{cor}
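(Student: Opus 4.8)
The plan is to evaluate both binomial coefficients modulo $5$ by Lucas' theorem and to show that their ratio is always $-1$. I would first check that the two sides vanish modulo $5$ under exactly the same hypothesis on the base-$5$ digits of $m$. For the left-hand coefficient this is Theorem~\ref{Cneq0}: it is $\not\equiv 0 \pmod 5$ precisely when $2 \geq m_0 \geq m_1 \geq \cdots \geq m_{n-2} \geq 0$. For the right-hand coefficient I would apply Lucas' theorem directly, using that $5m+2 < 5^{n}$ so that $5m+2 = 2 + m_0 5 + \cdots + m_{n-2}5^{n-1}$ and $m = m_0 + \cdots + m_{n-2}5^{n-2}$ involve no carries; this yields $\binom{5m+2}{m} \equiv \binom{2}{m_0}\prod_{j=1}^{n-1}\binom{m_{j-1}}{m_j}$ (with $m_{n-1}:=0$), which is nonzero modulo $5$ under the very same monotonicity condition. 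Hence outside this region both sides are $\equiv 0$ and there is nothing to prove.

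Inside the monotone region I would reuse the $5$-adic expansions of $\alpha = 5m+2+\tfrac{5^{n}-1}{2}$ and $\beta = m-1+\tfrac{5^{n}-1}{2}$ already computed in the proof of Theorem~\ref{Cneq0} (here $0 \leq m_i \leq 2$ guarantees no carries): $\alpha$ has digits $4, m_0+2, \ldots, m_{n-2}+2$ and $\beta$ has digits $m_0+1, m_1+2, \ldots, m_{n-2}+2, 2$. Lucas' theorem then gives
\[
\binom{\alpha}{\beta} \equiv \binom{4}{m_0+1}\prod_{j=1}^{n-1}\binom{m_{j-1}+2}{m_j+2} \pmod 5 .
\]

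The core of the argument is to compare this with the product for $\binom{5m+2}{m}$. I would divide the two expressions factor by factor, using the integer identities $(b+1)(b+2)\binom{a+2}{b+2} = (a+1)(a+2)\binom{a}{b}$ and $(m_0+1)(3-m_0)\binom{4}{m_0+1} = 12\binom{2}{m_0}$, whose denominators are units modulo $5$ because every digit lies in $\{0,1,2\}$. The resulting ratio telescopes: the middle factors collapse to $\tfrac{(m_0+1)(m_0+2)}{2}$ (as $m_{n-1}=0$), and after multiplying by the position-zero ratio $\tfrac{12}{(m_0+1)(3-m_0)}$ one is left with $\tfrac{6(m_0+2)}{3-m_0}$, which equals $4 \equiv -1 \pmod 5$ for each of $m_0 = 0, 1, 2$. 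This gives $\binom{\alpha}{\beta} \equiv -\binom{5m+2}{m} \pmod 5$ throughout the region, and hence everywhere.

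I expect the only delicate point to be the digit bookkeeping for $\alpha$ and $\beta$, namely confirming that the shift by $\tfrac{5^{n}-1}{2}$ (all of whose digits are $2$) combined with the $+2$ and the $-1$ produces neither carries nor borrows when each $m_i \leq 2$, so that Lucas' theorem applies verbatim. Since these expansions are already in hand from Theorem~\ref{Cneq0}, this obstacle is effectively removed and only the short telescoping computation remains. As a routine alternative to the telescoping, one could instead evaluate both products on the four digit patterns of $m = \tfrac{5^{k_1}-1}{4}+\tfrac{5^{k_2}-1}{4}$ from Theorem~\ref{Cneq0}$(iii)$, obtaining $\binom{5m+2}{m}\equiv 1$ or $2$ and $\binom{\alpha}{\beta}\equiv 4$ or $3$ according as $k_1=k_2$ or $k_1<k_2$.
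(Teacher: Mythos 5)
Your proof is correct, and its overall skeleton matches the paper's: first show that $\binom{5m+2+\frac{5^{n}-1}{2}}{m-1+\frac{5^{n}-1}{2}}$ and $\binom{5m+2}{m}$ vanish modulo $5$ for exactly the same digit patterns of $m$ (the paper does this by the same comparison of Theorem~\ref{Cneq0} with a direct Lucas computation for $\binom{5m+2}{m}$), then handle the nonvanishing case. Where you genuinely differ is in that second step. The paper specializes to $m=\frac{5^{k_1}-1}{4}+\frac{5^{k_2}-1}{4}$ via Theorem~\ref{Cneq0}$(iii)$ and evaluates the left-hand coefficient digit by digit as in Theorem~\ref{Aeq13}, obtaining $3+\binom{k_1}{k_2}$ and matching it against $-\binom{5m+2}{m}$ in the two subcases $k_1=k_2$ and $k_1<k_2$; you instead take the ratio of the two Lucas products and telescope it with the identities $(b+1)(b+2)\binom{a+2}{b+2}=(a+1)(a+2)\binom{a}{b}$ and $(m_0+1)(3-m_0)\binom{4}{m_0+1}=12\binom{2}{m_0}$, collapsing everything to $\frac{6(m_0+2)}{3-m_0}\equiv 4\equiv -1\pmod 5$ uniformly in $m_0\in\{0,1,2\}$. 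I checked the telescoping and the digit bookkeeping (no carries or borrows since every $m_i\le 2$ in the monotone region, and all denominators are units mod $5$ there); the computation is sound, and for $n=1$ the statement reduces to the trivial check $\binom{4}{1}\equiv-\binom{2}{0}$. Your route buys uniformity: it needs only the monotonicity condition $(ii)$ rather than the explicit parametrization $(iii)$ and avoids the case split on $(k_1,k_2)$, at the cost of two small auxiliary identities; the paper's route is shorter on the page because it recycles the computation of Theorem~\ref{Aeq13}. Your closing alternative is essentially the paper's proof, and the values you quote ($4$ or $3$ on the left, $1$ or $2$ on the right) agree with the paper's $3+\binom{k_1}{k_2}$.
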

\begin{proof}
Denote by $C$ and $D$ the above binomial coefficients, respectively.
Then by Lucas' theorem, $D \not\equiv 0 \pmod{5}$ if and only if
$2 \geq m_0 \geq m_1 \geq \cdots \geq m_{n-2} \geq 0$.
That is, $D \not\equiv 0 \pmod{5}$ if and only if $C \not\equiv 0 \pmod{5}$.
Hence $C \equiv -D \pmod{5}$ when $C \equiv 0 \pmod{5}$.
On the other hand, if $C \not\equiv 0 \pmod{5}$, then, by Theorem~\ref{Cneq0},
$m = \frac{5^{k_1} -1}{4} + \frac{5^{k_2} -1}{4}$,
where $0 \leq k_1 \leq k_2 \leq n-1$.
An argument similar to that in Theorem~\ref{Aeq13} shows
$C \equiv 3 +\binom{k_1}{k_2} \equiv -D \pmod{5}$.
Hence $C \equiv -D \pmod{5}$ for $C \not\equiv 0 \pmod{5}$.
\end{proof}

\begin{cor}\label{em023=0}
Let $T =(5^{n-1}-1)/2$, $n \geq 2$ and $0 \leq m \leq T$. Then
\[
\binom{5m+3}{m} + \binom{5m+2 +\frac{5^{n}-1}{2}}{m -1 +\frac{5^{n}-1}{2}}
\equiv \binom{5m+2}{m -1}  \pmod{5}.
\]
\end{cor}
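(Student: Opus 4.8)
The plan is to reduce the entire statement to a single application of Pascal's rule combined with the congruence already established in Corollary~\ref{Ceq-D}. Following the notation of the earlier corollaries, write $A = \binom{5m+3}{m}$ and $B = \binom{5m+2}{m-1}$ for the two outer binomial coefficients, abbreviate the middle term by $C = \binom{5m+2+\frac{5^{n}-1}{2}}{m-1 +\frac{5^{n}-1}{2}}$, and set $D = \binom{5m+2}{m}$; the goal is the congruence $A + C \equiv B \pmod 5$.

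First I would apply Pascal's identity $\binom{N}{k} = \binom{N-1}{k-1} + \binom{N-1}{k}$ with $N = 5m+3$ and $k = m$, which yields the exact integer identity
\[
\binom{5m+3}{m} = \binom{5m+2}{m-1} + \binom{5m+2}{m},
\]
that is, $A = B + D$, where $D$ is precisely the binomial coefficient appearing on the right-hand side of Corollary~\ref{Ceq-D}. This remains valid at the endpoint $m = 0$, since there $B = \binom{2}{-1} = 0$ under the usual convention.

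Next I would invoke Corollary~\ref{Ceq-D}, whose hypothesis $0 \leq m \leq 2T$ is satisfied throughout the present range $0 \leq m \leq T$, to obtain $C \equiv -D \pmod 5$. Substituting $A = B + D$ and $C \equiv -D$ into the sum $A + C$ makes the two copies of $D$ cancel modulo $5$, leaving $A + C \equiv B \pmod 5$, which is exactly the assertion of the corollary.

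I expect essentially no obstacle once the Pascal recurrence is spotted: the statement is the formal combination of one classical binomial identity with the previously proved Corollary~\ref{Ceq-D}, so no $5$-adic expansion or case analysis is required here. The only point worth recording is that the narrower range $m \leq T$ of the present statement is trivially compatible with the broader range $m \leq 2T$ under which Corollary~\ref{Ceq-D} was established, so the substitution is legitimate.
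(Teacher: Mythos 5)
Your proof is correct, and it reaches the conclusion by a slightly different (and arguably cleaner) mechanism than the paper. The paper also reduces everything to Corollary~\ref{Ceq-D}, but instead of Pascal's rule it uses the multiplicative ratio identities $mA=(5m+3)B$ and $(4m+3)A=(5m+3)D$, reads them modulo $5$ to get $B\equiv 2mA$ and $D\equiv(3m+1)A$, and then concludes $A+C\equiv A+(2m-1)A=2mA\equiv B\pmod 5$. Your route replaces those two mod-$5$ manipulations with the single exact integer identity $A=B+D$ from Pascal's recurrence, after which the cancellation $A+C\equiv (B+D)-D=B$ is immediate; this avoids any inversion of residues modulo $5$ and handles the boundary case $m=0$ just as transparently (via the convention $\binom{5m+2}{-1}=0$, which the paper's identity $0\cdot A=3B$ also forces). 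Both arguments place the entire arithmetic burden on Corollary~\ref{Ceq-D}, whose range $0\le m\le 2T$ indeed covers the present range $0\le m\le T$, so your substitution is legitimate; the net effect is the same result with one fewer auxiliary congruence.
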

\begin{proof}
Denote by $A$, $C$, $B$ the above binomial coefficients, respectively.
By $mA = (5m +3)B$, we get $2mA \equiv B \pmod{5}$. Let $D =\binom{5m+2}{m}$.
Then $(4m+3)A = (5m+3)D$, and so $D \equiv (3m+1)A \pmod{5}$. By Corollary~\ref{Ceq-D},
$C \equiv -D \equiv (2m-1)A \pmod{5}$. Hence $A + C \equiv B \pmod{5}$.
\end{proof}

With the help of the preceding results, we now prove Theorem~\ref{x5-1}.
According to~\eqref{bi=e-mk}, the coefficients $b_{i,q-2}$ of $f^{-1}$ are linear combinations
of $e_{m0}$, $e_{m1}$, $e_{m2}$, $e_{m3}$ defined by~\eqref{e-mk}.
Corollary~\ref{em1=0} and the congruence~\eqref{e-m1} imply that
\[
e_{m1} \equiv 0 \pmod{5}  \quad \text{$0 \leq m \leq  3T$,}
\]
where $T =(5^{n-1}-1)/2$. By~\eqref{e-m0}, \eqref{e-m2} and Corollary~\ref{em02=0},
\[
e_{m0} +e_{m2} \equiv 0 \pmod{5} \quad \text{for $T < m \leq 2T$.}
\]
From~\eqref{e-m0}, \eqref{e-m2}, \eqref{e-m3} and Corollary~\ref{em023=0}, we obtain
\[
e_{m0}+e_{m2}+e_{m3} \equiv 0 \pmod{5}  \quad \text{for $0\leq m \leq T$.}
\]
By~\eqref{e-m0} and Corollary~\ref{em0=13}, for $2T < m \leq 4T+1$ we get,~in~$\F_5$,
\[
e_{m0} =
 \begin{cases}
  3^{\binom{k_1}{k_2}} a^{-5m-2}
  & \text{if $m = (5^{n} +5^{k_1} +5^{k_2} -3)/4$,} \\
  ~~0  & \text{otherwise,}
 \end{cases}
\]
where $0 \leq k_1 \leq k_2 \leq n-1$.
Then Theorem~\ref{x5-1} follows from~\eqref{bi=e-mk} and Theorem~\ref{coe-f-1}
when $n \geq 2$
(since $n \geq 2$ is a necessary condition of Corollaries~\ref{em02=0} and \ref{em023=0}).
In addition, it is easy to verify that Theorem~\ref{x5-1} also holds for $n=1$.


%

\section*{Acknowledgment}

We are grateful to the referees and the editor for many useful comments and suggestions.
We would like to thank Junwei Guo, Fu Wang, Baofeng Wu for their helpful suggestions.
The work was partially supported by
the National Key R$\&$D Program of China (2017YFB0802000),
the National NSF of China (61602125, 61702124, 61862011, 61862012, 61802221),
the China Postdoctoral Science Foundation (2018M633041),
the NSF of Guangxi (2016GXNSFBA380153, 2017GXNSFAA198192),
the Guangxi Science and Technology Plan Project (AD18281065),
the Guangxi Key Laboratory of Cryptography and Information Security (GCIS201817),
the research start-up grants of Dongguan University of Technology,
the Guangdong Provincial Science and Technology Plan Projects
(2016A010101034, 2016A010101035),
the Key Research and Development Program for Guangdong Province (2019B010136001),
and the Peng Cheng Laboratory Project of Guangdong Province (PCL2018KP005 and PCL2018KP004).

\ifCLASSOPTIONcaptionsoff
  \newpage
\fi

\vspace{-24pt}
\begin{IEEEbiographynophoto}{Yanbin Zheng}
received the Ph.D. degree in Mathematics from Guangzhou University, China, in 2015.
He then joined the School of Computer Science and Engineering,
Guilin University of Electronic Technology, China.
He is currently a joint postdoctoral research fellow between
the School of Computer Science and Engineering,
South China University of Technology,
and the School of Computer Science and Technology,
Dongguan University of Technology.
His research interests include finite fields and cryptography.
\end{IEEEbiographynophoto}
\vspace{-24pt}
\begin{IEEEbiographynophoto}{Qiang Wang}
was born in China where he received B. Sc., M.Sc. degrees in Mathematics
from ShaanXi Normal University (China). He received a  M. Sc. degree in
information and System Science from Carleton University (Canada) and a
Ph.D. in Mathematics from the Memorial University of Newfoundland
(Canada).  He is currently a Professor at Carleton University
in Ottawa (Canada). His main research interests are in finite fields and
applications in coding theory, combinatorics, and cryptography.
\end{IEEEbiographynophoto}
\vspace{-24pt}
\begin{IEEEbiographynophoto}{Wenhong Wei}
received the Ph.D. degree in Computer Science from
South China University of Technology, China, in 2009.
He is currently a full professor in the School of Computer Science and Technology,
Dongguan University of Technology, China.
His research interests include discrete mathematics and computer network.

\end{IEEEbiographynophoto}


\vfill


\end{document}